\numberwithin{equation}{section}
\theoremstyle{plain}
\newtheorem{theorem}{Theorem}
\newtheorem{lemma}[theorem]{Lemma}
\theoremstyle{definition}
\theoremstyle{remark}
\newtheorem{remark}{Remark}
\renewcommand{\Re}{\operatorname{Re}}
\renewcommand{\Im}{\operatorname{Im}}
\newcommand{\supp}{\operatorname{supp}}
\newcommand{\Sym}{\operatorname{Sym}}
\newcommand{\GL}{\operatorname{GL}}
\newcommand{\SL}{\operatorname{SL}}
\newcommand{\PSL}{\operatorname{PSL}}
\newcommand{\dd}{\mathrm{d}}
\newcommand{\Res}{\operatorname{Res}}
\newcommand{\area}{\operatorname{area}}
\newcommand{\rmmod}{\operatorname{mod}\;}
\def\@tocline#1#2#3#4#5#6#7{\relax
  \ifnum #1>\c@tocdepth 
  \else
    \par \addpenalty\@secpenalty\addvspace{#2}%
    \begingroup \hyphenpenalty\@M
    \@ifempty{#4}{%
      \@tempdima\csname r@tocindent\number#1\endcsname\relax
    }{%
      \@tempdima#4\relax
    }%
    \parindent\z@ \leftskip#3\relax \advance\leftskip\@tempdima\relax
    \rightskip\@pnumwidth plus4em \parfillskip-\@pnumwidth
    #5\leavevmode\hskip-\@tempdima
      \ifcase #1
       \or\or \hskip 1em \or \hskip 2em \else \hskip 3em \fi%
      #6\nobreak\relax
    \hfill\hbox to\@pnumwidth{\@tocpagenum{#7}}\par
    \nobreak
    \endgroup
  \fi}
\begin{document}

\title[Quantum variance for Eisenstein Series]
{Quantum variance for Eisenstein Series}
\author{Bingrong Huang}
\address{School of Mathematical Sciences \\ Tel Aviv University \\ Tel Aviv \\ Israel}
\email{bingronghuangsdu@gmail.com}

\date{\today}

\begin{abstract}
  In this paper, we prove an asymptotic formula for the quantum variance for Eisenstein series on $\PSL_2(\mathbb{Z})\backslash \mathbb{H}$. The resulting quadratic form is compared with the classical variance and the quantum variance for cusp forms. They coincide after inserting certain subtle arithmetic factors, including the central values of certain L-functions.
\end{abstract}

\keywords{Quantum variance, Eisenstein series, Hecke L-function, twisted second moment}

\thanks{The work was supported  by the European Research Council, under the European Union's Seventh Framework Programme (FP7/2007-2013)/ERC grant agreement n$^{\text{o}}$~320755.}

\maketitle

\section{Introduction} \label{sec:Intr}


An important problem in the theory of quantum chaos is to understand the variance of matrix coefficients of observables. In the generic chaotic case, there are conjectures in the physics literature relating this quantum variance to the auto-correlation of the observable along the classical motion. These conjectures are wide open except for a handful of cases, namely the modular domain  described below, and the quantized cat map (Kurlberg--Rudnick \cite{KurlbergRudnick2005}).
In this paper, we compute a new instance of the quantum variance, for the continuous spectrum of the modular domain.

Let $G=\PSL_2(\mathbb{R})$, $\Gamma=\PSL_2(\mathbb{Z})$, $\mathbb{H}$ be the upper half plane, and
$\mathbb X=\Gamma\backslash \mathbb{H}$ the modular surface.
$\mathbb{X}$ is a hyperbolic surface that is not compact but of finite area.
The spectrum of the Laplacian on $\mathbb{X}$ has both  discrete and continuous components.
The continuous spectrum is spanned by Eisenstein series.
The discrete spectrum consists of the constants and the space $L_{\rm cusp}^2(\mathbb{X})$ of cusp forms,
for which we can take an orthonormal basis $\{\phi_j\}_{j\in\mathbb{N}}$ of Hecke--Maass forms.
They are real valued and satisfy
$\Delta \phi_j = \lambda_j \phi_j$ and $T_n \phi_j=\lambda_j(n) \phi_j$, where $\Delta$ is the Laplace operator, $T_n$ is the $n$-th Hecke operator. Write $\lambda_j=1/4+t_j^2$.
$\mathbb X$ carries a further symmetry induced by the orientation reversing isometry $z\rightarrow -\bar{z}$ of $\mathbb{H}$ and our $\phi_j$'s are either even or odd with respect to this symmetry.

The goal of this paper is to study the quantum variance for the continuous spectrum, which is parametrized by Eisenstein series.
The Eisenstein series $E(z,s)$ is defined by
\[
  E(z,s) = \sum_{\gamma\in \Gamma_\infty\backslash \Gamma} \Im(\gamma z)^s = \frac{1}{2} \sum_{(c,d)=1} \frac{y^s}{|cz+d|^{2s}},
\]
for $\Re(s)>1$, and has a meromorphic continuation to $s\in\mathbb{C}$, where 
$\Gamma_\infty=\left\{\left(\begin{smallmatrix}1&n\\ &1\end{smallmatrix}\right):n\in\mathbb{Z}\right\}$.
Write $E_{it}(z)=E(z,1/2+it)$.
For a test function $\psi:\mathbb{X}\rightarrow\mathbb{C}$, define
\[
  \mu_t(\psi) = \langle \psi,|E_{it}|^2 \rangle = \int_{\mathbb{X}} \psi(z) |E_{it}(z)|^2 \dd \mu(z),
\]
where $\dd\mu(z)=\frac{\dd x \dd y}{y^2}$.
In this paper, we are interested in the fluctuation of $\mu_t$.
In \cite{Luo-Sarnak1995}, Luo--Sarnak  proved an analogue of quantum unique ergodicity (QUE) for Eisenstein series: for fixed $\psi$ (smooth and compactly-supported on $\mathbb{X}$)
\[
  \mu_t(\psi) \sim \frac{6}{\pi} \log t \int_{\mathbb{X}} \psi(z) \dd \mu(z).
\]
Here we have corrected the constant in place of $\frac{48}{\pi}$ as stated by \cite{Luo-Sarnak1995}, see 
  \cite[eq. (1.1)]{Spinu2003}.
Moreover, let $\phi$ be an even Hecke--Maass cusp form.
They proved
\[
  \mu_t(\phi) \ll t^{-1/6+\varepsilon} \quad \textrm{for all $\varepsilon>0$, as $t\rightarrow\infty$.}
\]
Let $L(s,\phi)$ be the standard L-function of $\phi$.
The Lindel\"{o}f Hypothesis for $L(s,\phi)$ implies $\mu_t(\phi) \ll t^{-1/2+\varepsilon}$.
In \S \ref{sec:EV}, we prove the following upper bound for the expected value.
\begin{theorem}\label{thm:EV}
If $\phi$ is an even Hecke--Maass cusp form, then we have
\begin{equation*}
  \mathbb{E}(\phi;T) = \frac{1}{T}\int_{T}^{2T} \mu_t(\phi) \dd t = o(T^{-1/2}).
\end{equation*}
Note that for an odd Hecke--Maass cusp form $\phi$, we always have $\mu_t(\phi)=0$.
\end{theorem}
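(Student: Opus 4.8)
The plan is to combine an exact formula for $\mu_t(\phi)$ with the approximate functional equation and stationary phase. For an odd Hecke--Maass form the statement is trivial, since $|E_{it}|^2$ is invariant under the reflection $z\mapsto-\bar z$ while $\phi$ changes sign, so $\mu_t(\phi)=0$; assume from now on that $\phi$ is even. I would first unfold the Rankin--Selberg integral $\mu_t(\phi)=\langle\phi,|E_{it}|^2\rangle$. Because the Eisenstein series do not converge absolutely on the critical line this needs a regularization (truncating the Eisenstein series, or Zagier's renormalized inner product), but the result is the classical identity --- coming from the Fourier expansions of $\phi$ and $E_{it}$ together with a Mellin transform of $K_{it_\phi}(y)K_{it}(y)$, as in \cite{Luo-Sarnak1995} --- of the shape
\[
  \mu_t(\phi)=c_\phi\, L(1/2,\phi)\,\frac{\cosh(\pi t)\,\Lambda(1/2+2it,\phi)}{|\zeta(1+2it)|^2},
\]
where $\Lambda(s,\phi)$ is the completed $L$-function, $c_\phi\neq0$ depends only on $\phi$ (through $\rho_\phi(1)$, equivalently $L(1,\Sym^2\phi)$), and $\Lambda(1/2+2it,\phi)\in\mathbb{R}$ because $\phi$ is even and self-dual.

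Next I would extract the archimedean behaviour by Stirling: writing $\Lambda(1/2+2it,\phi)=\gamma(1/2+2it,\phi)L(1/2+2it,\phi)$, one has $\cosh(\pi t)\,\gamma(1/2+2it,\phi)=a_0\,t^{-1/2}e^{i\Theta(t)}(1+O(1/t))$ with $\Theta(t)=2t\log(t/\pi)-2t-\pi/4$ and $a_0$ a constant. Feeding in the approximate functional equation for the self-dual $L(1/2+2it,\phi)$, whose analytic conductor at height $2t$ is $\asymp t^2$, and using the reality of $\Lambda$, one collapses the two AFE sums into one:
\[
  \mu_t(\phi)=2c_\phi'\, L(1/2,\phi)\,\frac{t^{-1/2}}{|\zeta(1+2it)|^2}\sum_{n\ge1}\frac{\lambda_\phi(n)}{\sqrt n}\,W(n,t)\cos\!\big(\Theta(t)-2t\log n\big)\,(1+O(1/t)),
\]
with $W$ a smooth cutoff that confines the sum to $n\ll t^{1+\varepsilon}$.

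Then I would integrate over $t\in[T,2T]$, divide by $T$, and split the $n$-sum according to the size of the phase derivative $\Theta'(t)-2\log n=2\log(t/(\pi n))$, which vanishes only at $t=\pi n$. For $n$ away (on the logarithmic scale) from $t/\pi$ --- in particular $n=1$, and all $n\le\delta T$ or $n\ge\delta^{-1}T$ --- repeated integration by parts produces any power of $\log T$, so after summing $\lambda_\phi(n)n^{-1/2}$ trivially these ranges give $O_A(T^{-1/2}(\log T)^{-A})$; the one subtlety is that $|\zeta(1+2it)|^{-2}$, whose pointwise variation is large, must first be replaced by a short Dirichlet polynomial with a controlled mean-square error (one also uses $|\zeta(1+2it)|^{-1}\ll\log t$). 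The decisive range is $n\asymp T$: here $t_n=\pi n$ is a stationary point, each such $t$-integral has size $\asymp t_n^{-1/2}|\Theta''(t_n)|^{-1/2}=O(1)$ up to a factor $|\zeta(1+2it_n)|^{-2}$, and --- crucially --- the stationary value of the phase equals $-2\pi n-\pi/4$, so the accompanying factor is $\cos(-2\pi n)=1$ and the contributions do not oscillate against one another. They assemble into $\asymp T^{-3/2}\sum_{n\asymp T}\lambda_\phi(n)W(n,\pi n)|\zeta(1+2\pi in)|^{-2}$; expanding the zeta-factor into a Dirichlet polynomial and invoking Wilton's bound $\sum_{n\le X}\lambda_\phi(n)e(\alpha n)\ll_\phi X^{1/2+\varepsilon}$, uniform in $\alpha$, this is $\ll T^{1/2+\varepsilon}$, so the $n\asymp T$ range contributes $\ll T^{-1+\varepsilon}$. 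The $O(1/t)$ error and the lower-order stationary-phase terms are absorbed by the first-moment bound $\int_T^{2T}|L(1/2+2it,\phi)|\,\dd t\ll T^{1+\varepsilon}$. Altogether $\mathbb{E}(\phi;T)\ll_\phi T^{-1+\varepsilon}$, which is $o(T^{-1/2})$.

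The main obstacle I expect is the range $n\asymp T$: one has to recognise that the stationary-phase main terms are aligned in phase, so that the saving beyond the square-root barrier comes genuinely from the arithmetic of $\lambda_\phi$ (via Wilton/Voronoi) and not from oscillation of the $t$-integral, and one has to dispose of the rough weight $|\zeta(1+2it)|^{-2}$ carefully enough that Wilton's estimate applies --- via a Dirichlet polynomial approximation together with mean-value estimates for $\zeta$ on the line $\Re s=1$. A cleaner but equivalent route replaces the explicit stationary-phase analysis by Voronoi summation in the variable $n$.
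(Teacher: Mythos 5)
Your route is essentially the paper's: the exact Rankin--Selberg formula for $\mu_t(\phi)$, Stirling to extract $t^{-1/2}e^{\pm 2it\log(t/(e\pi))}$, a short Dirichlet polynomial for $\zeta(1+2it)^{-2}$, the approximate functional equation for $L(\tfrac12\pm 2it,\phi)$, stationary phase in $t$, and then square-root cancellation in the resulting additively twisted $n$-sum; the paper does this last step by splitting into residue classes mod $k$ and applying Meurman's Voronoi formula with Bessel asymptotics, while you invoke the uniform Wilton bound, which is the same tool in different packaging. Two points should be corrected, though neither affects the validity of the stated result. First, once the Dirichlet polynomial $\sum_{k,\ell}\mu(k)\mu(\ell)(k\ell)^{-1}(\ell/k)^{2it}$ is inserted (and it must be inserted before the $t$-integration, as you note, since $\zeta(1+2it)$ varies on a scale far shorter than the stationary-phase window $\asymp T^{1/2}$), the stationary point sits at $t_0=\pi\ell n/k$ and the stationary value of the phase is $\mp 2\pi\ell n/k$, not $-2\pi n$; so the ``aligned phase'' picture holds only for the $k=\ell$ terms, and in general one faces rational twists $e(\mp\ell n/k)$ --- your appeal to Wilton uniformly in $\alpha$ does cover this, but the intermediate expression $\sum_{n\asymp T}\lambda_\phi(n)|\zeta(1+2\pi in)|^{-2}$ as written is not what the argument produces. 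Second, the final claim $\mathbb{E}(\phi;T)\ll_\phi T^{-1+\varepsilon}$ overshoots what this method gives unconditionally: the replacement of $\zeta(1+2it)^{-2}$ by a short Dirichlet polynomial rests on the Vinogradov--Korobov zero-free region and leaves an error of size $O(e^{-(\log T)^{1/5}})$, whose contribution to $\mathbb{E}(\phi;T)$ is of order $T^{-1/2}e^{-(\log T)^{1/6}}$ (there is no unconditional power-saving approximation, pointwise or in mean square, on the $1$-line); this is precisely why the theorem is stated as $o(T^{-1/2})$ rather than with a power saving, and with this correction your argument proves exactly that.
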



Define the quantum variance for Eisenstein series by
\[
  Q_E(\phi,\psi)
  =
  \lim_{T\rightarrow\infty}  \frac{1}{\log T} \int_{T}^{2T} \big( \mu_t(\phi) - \mathbb{E}(\phi;T) \big)
   \overline{\big(\mu_t(\psi)-\mathbb{E}(\psi;T) \big)} \dd t.
\]
for $\phi,\psi\in L_{\rm cusp}^2(\mathbb{X})$.
Note that, by Theorem \ref{thm:EV}, we have
\[
  Q_E(\phi,\psi)
  =
  \lim_{T\rightarrow\infty}  \frac{1}{\log T} \int_{T}^{2T} \mu_t(\phi) \overline{\mu_t(\psi)} \dd t,
\]
Our main result in this paper is as follows.

\begin{theorem}\label{thm:QV}
  Let $\phi,\psi \in \{\phi_j\}_{j\in\mathbb{N}}$ be two Hecke--Maass cusp forms. Then we have
  \[
    Q_E(\phi,\psi)
    = \left\{
    \begin{array}{ll}
        C(\phi) L(\frac{1}{2},\phi)^2 \, V(\phi), & \textrm{if $\phi=\psi$ is even}, \\
      0, & otherwise,
    \end{array} \right.
  \]
  where $C(\phi)$ is a product of local densities given by \eqref{eqn:C} and \eqref{eqn:H(s)}, and
  \begin{equation}\label{eqn:V(phi)}
  V(\phi) = \frac{\big|\Gamma(\frac{1}{4}+\frac{it_\phi}{2})\big|^4} {2\pi |\Gamma(\frac{1}{2}+it_\phi)|^2}.
\end{equation}
  In particular, $Q_E$ is diagonalized by the orthonormal basis $\{\phi_j\}$ of Hecke--Maass cusp forms.
\end{theorem}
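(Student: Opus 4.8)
The plan is to obtain an exact closed formula for $\mu_t(\phi)$ as a ratio of $L$-functions, and then to recognise $Q_E$ as a twisted second moment of the Hecke $L$-function $L(s,\phi)$ and evaluate it.

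\emph{Step 1 (an exact formula for $\mu_t(\phi)$).} Writing $s_1=\tfrac12+it$, $s_2=\tfrac12-it$, so that $|E_{it}|^2=E(\cdot,s_1)E(\cdot,s_2)$, I would first compute $\mu_t(\phi)$ for $\Re s_1$ large by the Rankin--Selberg unfolding $\mu_t(\phi)=\int_{\Gamma_\infty\backslash\mathbb{H}}\phi(z)\,y^{s_1}E(z,s_2)\,\dd\mu(z)$: insert the Fourier expansions of $\phi$ and of $E(\cdot,s_2)$, and use the cuspidality of $\phi$ to annihilate the contribution of the constant term of $E(\cdot,s_2)$. The surviving sum factors as a Mellin transform of a product of two $K$-Bessel functions --- a ratio of four Gamma values over $\Gamma(s_1)$, by the Barnes/Weber--Schafheitlin formula --- times the Dirichlet series $\sum_m\lambda_\phi(m)\sigma_{1-2s_2}(m)\,m^{s_2-1/2-s_1}$, which by the Ramanujan identity equals $L(s_1-s_2+\tfrac12,\phi)\,L(s_1+s_2-\tfrac12,\phi)\big/\zeta(2s_1)$. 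After meromorphic continuation and one use of the functional equation $\Lambda_\phi(s)=\Lambda_\phi(1-s)$, where $\Lambda_\phi(s)=\pi^{-s}\Gamma(\tfrac{s+it_\phi}{2})\Gamma(\tfrac{s-it_\phi}{2})L(s,\phi)$, this collapses, for $\phi$ even, to
\[
  \mu_t(\phi)=c\,\rho_\phi(1)\,\Lambda_\phi(\tfrac12)\,\frac{\Lambda_\phi(\tfrac12+2it)}{|\Lambda(1+2it)|^2},\qquad \Lambda(s)=\pi^{-s/2}\Gamma(\tfrac s2)\zeta(s),
\]
for an explicit absolute constant $c$; for $\phi$ odd the unfolded sum vanishes term by term, so $\mu_t(\phi)\equiv 0$. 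Since $\phi$ is real and $\Lambda_\phi(\tfrac12+2it)=\Lambda_\phi(\tfrac12-2it)=\overline{\Lambda_\phi(\tfrac12+2it)}$ for $\phi$ even, $\mu_t(\phi)$ is real. (This is essentially the computation of \cite{Luo-Sarnak1995}.)

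\emph{Step 2 (reduction to a twisted second moment).} By Theorem~\ref{thm:EV} the expected values are $o(T^{-1/2})$, so as already noted $Q_E(\phi,\psi)=\lim_{T\to\infty}\tfrac1{\log T}\int_T^{2T}\mu_t(\phi)\overline{\mu_t(\psi)}\,\dd t$. I would substitute the formula of Step~1, write $\Lambda_\phi(\tfrac12+2it)=\pi^{-1/2-2it}g_\phi(t)\,L(\tfrac12+2it,\phi)$ with $g_\phi(t)=\Gamma(\tfrac14+it+\tfrac{it_\phi}2)\Gamma(\tfrac14+it-\tfrac{it_\phi}2)$, use $|\Lambda(1+2it)|^2=\pi^{-1}|\Gamma(\tfrac12+it)|^2|\zeta(1+2it)|^2$, and invoke Stirling in the form $g_\phi(t)\overline{g_\psi(t)}\big/|\Gamma(\tfrac12+it)|^4=t^{-1}\bigl(1+O(t^{-1})\bigr)$ (the leading term being independent of $t_\phi,t_\psi$). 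This gives
\[
  Q_E(\phi,\psi)=c'\,\rho_\phi(1)\rho_\psi(1)\Lambda_\phi(\tfrac12)\Lambda_\psi(\tfrac12)\,\lim_{T\to\infty}\frac1{\log T}\int_T^{2T}\frac{L(\tfrac12+2it,\phi)\,L(\tfrac12-2it,\psi)}{t\,|\zeta(1+2it)|^4}\,\dd t
\]
for a new explicit constant $c'$, the $O(t^{-1})$ corrections contributing only $o(\log T)$. To evaluate this I would expand $|\zeta(1+2it)|^{-4}=\sum_{k,l\ge1}\mu_2(k)\mu_2(l)(kl)^{-1}(l/k)^{2it}$ (with $\mu_2$ the Dirichlet coefficients of $\zeta^{-2}$; made rigorous by shifting to $\Re=1+\varepsilon$ and truncating at $k,l\le T^{\theta}$) and open approximate functional equations for $L(\tfrac12+2it,\phi)$ and $L(\tfrac12-2it,\psi)$, reducing everything to the twisted second moments $\int_T^{2T}|L(\tfrac12+2it,\phi)|^2(l/k)^{2it}\,t^{-1}w(t)\,\dd t$ (when $\phi=\psi$) and their $\phi\neq\psi$ analogues.

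\emph{Step 3 (diagonal versus off-diagonal).} If $\phi\neq\psi$, or if either form is odd (so $\mu_t\equiv0$ trivially), then after the $t$-integration only the diagonal $nl=mk$ of the opened sum survives up to a negligible error, and the resulting arithmetic series is governed by a Dirichlet series whose $\phi$--$\psi$ Euler factor is $L(s,\phi\times\psi)\big/\zeta(2s)$; for distinct Hecke eigenforms this has no pole at $s=1$, so that series is bounded, the whole integral is $O(1)=o(\log T)$, and $Q_E(\phi,\psi)=0$. If $\phi=\psi$ is even, the same Dirichlet series has a simple pole at $s=1$ with residue proportional to $L(1,\sym^2\phi)$; this produces the $\log T$, and dividing by it leaves a factor $\log 2$ (from $\int_T^{2T}\dd t/t$) times a convergent Euler product $\prod_p(\text{local density})$ built from the $k,l,m,n$ summation. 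In the final constant the residue's $L(1,\sym^2\phi)$ cancels the one in the classical evaluation of $|\rho_\phi(1)|^2$; the factor $\Lambda_\phi(\tfrac12)^2=\pi^{-1}|\Gamma(\tfrac14+\tfrac{it_\phi}2)|^4L(\tfrac12,\phi)^2$ supplies $L(\tfrac12,\phi)^2$ together with the numerator of $V(\phi)$; the $\cosh(\pi t_\phi)$ hidden in $|\rho_\phi(1)|^2$ supplies $|\Gamma(\tfrac12+it_\phi)|^{-2}$; and the remaining numerical factors, the $\log 2$, and the Euler product assemble into $C(\phi)$, yielding $Q_E(\phi,\phi)=C(\phi)\,L(\tfrac12,\phi)^2\,V(\phi)$ with $V(\phi)$ as in \eqref{eqn:V(phi)}. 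The comparison with the classical variance and with the cusp-form quantum variance then amounts to matching these local factors.

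\emph{Main obstacle.} The crux is Step~3 when $\phi=\psi$: one needs the twisted second moment
\[
  \int_T^{2T}|L(\tfrac12+2it,\phi)|^2\,(l/k)^{2it}\,t^{-1}w(t)\,\dd t=P_{k,l}(\log T)+O\!\bigl((kl)^{A}T^{-\delta}\bigr)
\]
with a genuine power-saving error and with main term $P_{k,l}$ a polynomial in $\log T$ whose coefficients decay suitably in $k$ and $l$, so that the off-diagonal of the approximate functional equation really contributes $o(\log T)$ and the sum over $k,l$ against $\mu_2(k)\mu_2(l)(kl)^{-1}$ converges to the Euler product forming $C(\phi)$. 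This requires the full strength of second-moment technology for a $\GL_2$ $L$-function in the $t$-aspect --- a Kuznetsov/Motohashi-type spectral expansion of, or a delta-method treatment of, the underlying shifted convolution sum --- and it is there that the subtle arithmetic factors, in particular the central value $L(\tfrac12,\phi)^2$, finally emerge. Everything else --- the unfolding, the Stirling asymptotics, the regularisation of the $\zeta^{-4}$-expansion, and the $o(1)$ bookkeeping around $\mathbb{E}$ in Step~2 --- should be routine.
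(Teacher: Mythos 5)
Your proposal is correct and follows essentially the same route as the paper: the Rankin--Selberg identity for $\mu_t(\phi)$, Stirling reduction to $\int_T^{2T} L(\tfrac12-2it,\phi)L(\tfrac12+2it,\psi)\,|\zeta(1+2it)|^{-4}\,t^{-1}\dd t$, a truncated Dirichlet-polynomial expansion of $\zeta(1+2it)^{-2}$ (justified by the Vinogradov--Korobov zero-free region), the approximate functional equation, and a diagonal/off-diagonal split in which the diagonal Dirichlet series factors through $L(1+2s,\phi\times\psi)/\zeta(2+4s)$, so that only $\phi=\psi$ produces the $\log T$ and the local factors $H_\phi$, $\log 2$, \eqref{eqn:RS} assemble into $C(\phi)L(\tfrac12,\phi)^2V(\phi)$. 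The only point where you overestimate the difficulty is the ``main obstacle'': since the central values are already extracted as prefactors and only the leading $\log T$ coefficient is needed, the off-diagonal requires merely a power-saving upper bound on the shifted convolution $\sum_{km-\ell n=h}\lambda_\phi(m)\lambda_\psi(n)f(km,\ell n)$, which the paper takes ready-made from Harcos's delta-method theorem rather than proving a full twisted second-moment asymptotic $P_{k,l}(\log T)+O((kl)^AT^{-\delta})$.
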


\begin{remark}
  In \S \ref{sec:QVweighted}, we give an analogous result for a weighted quantum variance for Eisenstein series. 
  We have the same arithmetic factor $L(\frac{1}{2},\phi)^2$ as in our main theorem, but with another absolute constant instead of $C(\phi)$. This weighted quantum variance can be viewed as a consequence of the asymptotic formula for the second 
  moment of L-functions. To prove our main theorem, we essentially need an asymptotic formula for the twisted second moment.
\end{remark}

\begin{remark}
   It is known \cite{IwaniecSarnak2000} that at least 50\% of even $\phi$'s have $L(1/2,\phi)\neq0$.
   For $\Gamma=\PSL_2(\mathbb{Z})$, one expects $L(1/2,\phi)\neq0$ for every even Hecke--Maass form $\phi$. It can be shown, using a simplification of the expressions \eqref{eqn:a1} and \eqref{eqn:a2} pointed out by Peter Humphries, that $C(\phi)$ is never zero.
\end{remark}

\medskip

For the purpose of comparison, we introduce the classical variance.
The fluctuations of an observable $\psi\in C_0(\Gamma\backslash G)$ under the geodesic flow $\mathcal{G}_t$ was determined in  \cite{Ratner1973} and \cite{Ratner1987}, and it asserts that as $T$ goes to infinity,
$
  \frac{1}{\sqrt{T}}\int_0^T \psi(\mathcal{G}_t(x))\dd t
$
as a random variable on $ \Gamma\backslash G$ becomes Gaussian with mean zero and variance $V$ given by
\begin{equation}\label{eqn:V}
  V(\psi_1,\psi_2) = \int_{-\infty}^{\infty} \int_{\Gamma\backslash G} \psi_1(\mathcal{G}_t(x)) 
  \overline{\psi_2(x)} \dd x \dd t.
\end{equation}
Note that \eqref{eqn:V} converges due to  the rapid decay of correlations for the geodesic flow.
It has been conjectured in \cite{FP1986,EFKAMM}, that for ``generic'' chaotic systems such as the one at hand, the quantum fluctuations are also Gaussian with a variance which agrees with the classical one in \eqref{eqn:V}.
Let $\phi$ be a fixed even Hecke--Maass cusp form with the Laplace eigenvalue $\lambda_\phi=1/2+it_\phi$.
By \cite[Appendix A.1]{Luo-Sarnak2004}, we have $V(\phi,\phi) = V(\phi)$.
Hence Theorem \ref{thm:QV} asserts that the quantum variance for Eisenstein series is equal to the classical variance after inserting the ``correction factor'' of
$C(\phi) L(\frac{1}{2},\phi)^2$.

\medskip

To shed some light on this correction factor, we introduce the quantum variance for cusp forms,
which measures the fluctuations of the probability measures, 
$\dd \mu_{j}(z) = |\phi_j(z)|^2 \dd\mu(z),$
 in the semi-classical limit $t_j\rightarrow\infty$.
The quantum ergodicity theorem (QE) proved by Shnirelman \cite{Shnirelman1974}, Colin de Verdi\`{e}re \cite{CdV1985}, and Zelditch \cite{Zelditch1987} for compact surfaces, and extended by Zelditch \cite{Zelditch1992} to noncompact surfaces such as our $\mathbb{X}$, implies that there exists a full density subsequence $\lambda_{j_k}$ (i.e. $\sum\limits_{\lambda_{j_k}\leq \lambda} 1 \sim \sum\limits_{\lambda_j\leq \lambda} 1 \sim \frac{\area(\mathbb{X})}{4\pi} \lambda $) such that for an ``observable'' $\psi\in C(\mathbb{X})$,
\[
  \lim_{j_k\rightarrow\infty}\mu_{j_k}(\psi) = \frac{3}{\pi} \int_{\mathbb{X}} \psi(z) \dd \mu(z).
\]
The QUE conjectured by Rudnick--Sarnak \cite{RudnickSarnak1994} and proved by
Lindenstrauss \cite{Lindenstrauss2006} and 
Sounda-rarajan \cite{Soundararajan2010} asserts that there are no exceptional subsequences, i.e., $\dd\mu_j\rightarrow \frac{3}{\pi}\dd\mu$ as $j\rightarrow\infty$.

We can also consider holomorphic cusp forms in $S_k(\Gamma)$ of even integral weight $k$ for $\Gamma$.
$S_k(\Gamma)$ is a finite dimensional Hilbert space. Let $H_k$ be the orthonormal basis of Hecke cusp forms in $S_k(\Gamma)$. We have $\dim S_k(\Gamma) = \# H_k \sim k/12$ as $k\rightarrow\infty$.
QUE for the measures $\dd\mu_f = y^k|f(z)|^2 \dd\mu(z)$ for $f\in H_k$
was proved by Holowinsky--Soundararajan \cite{HS2010}.

\medskip

Let $C_{0,0}^\infty(\mathbb{X})$ be the space of smooth functions (e.g. $\psi$) on $\mathbb{X}$ that decay rapidly in the cusp with mean zero (i.e. $\int_\mathbb{X} \psi(z) \dd\mu(z)=0$) and whose zeroth Fourier coefficient $\int_{0}^{1}\psi(x+iy)\dd x$ is zero for $y$ large enough (depending on $\psi$).
Note that $L_{\rm cusp}^2(\mathbb{X})\subset C_{0,0}^\infty(\mathbb{X})$.
In order to prove ``Shnirelman's theorem'', Zelditch \cite{Zelditch1994} introduced quantum variance sums for cusp forms,
\[
  S_\psi(\lambda) = \sum_{\lambda_j\leq \lambda} |\mu_j(\psi)|^2 \quad
  \textrm{for $\psi\in C_{0,0}^\infty(\mathbb{X})$}.
\]
He showed the non-trivial upper bound $S_\psi(\lambda)\ll_\psi \lambda/\log \lambda$.
Luo--Sarnak \cite{Luo-Sarnak1995} established $S_\psi(\lambda)\ll_\psi \lambda^{1/2+\varepsilon}$ for any $\varepsilon>0$.
In \cite{Luo-Sarnak2004}, Luo--Sarnak proved an asymptotic formula for a weighted  quantum variance sum for holomorphic modular forms.
More precisely,
let $L(s,f)$ and $L(s,\phi)$ be the corresponding standard L-functions, and let $\Sym^2 f$ and $\Sym^2 \phi$ be the symmetric square lifts of $f$ and $\phi$. Let $L(s,\Sym^2 f)$ and $L(s,\Sym^2 \phi)$ be the corresponding L-functions.
For fixed $u\in C_0^\infty(0,\infty)$, let
\[
  B_\omega(\psi) = \lim_{K\rightarrow\infty} \frac{1}{K \int_{0}^{\infty}u(x)\dd x} \sum_{2|k} u\Big(\frac{k-1}{K}\Big) \sum_{f\in H_k} L(1,\Sym^2f) |\mu_f(\psi)|^2,
\]
which is a non-negative Hermitian form defined on $C_{0,0}^\infty(\mathbb{X})$.
Luo--Sarnak showed that $B_\omega$ satisfies the symmetries
\begin{equation*}
  B_\omega(\Delta \psi_1,\psi_2) = B_\omega( \psi_1,\Delta \psi_2), \quad
  B_\omega(T_n \psi_1,\psi_2) = B_\omega( \psi_1,T_n \psi_2) \quad \textrm{for all $n\geq1$}.
\end{equation*}
Restricting $B_\omega$ to $L_{\rm cusp}^2(\mathbb{X})$, $B_\omega$ is diagonalized by $\{\phi_j\}_{j\in\mathbb{N}}$ and the eigenvalue of $B_\omega$ at $\phi_j$ is $\frac{\pi}{2}L(1/2,\phi_j)$.
Similar results for Hecke--Maass cusp forms were proved in Luo--Sarnak \cite{Luo-Sarnak2004} and extended by Zhao \cite{Zhao2010},
the only difference is that the eigenvalue of $B$ at $\phi$ is given by
\begin{equation*}
  B(\phi) = \frac{1}{2} L\Big(\frac{1}{2},\phi\Big)V(\phi).
\end{equation*}
Recently, Sarnak--Zhao \cite{SarnakZhao2013} obtained the asymptotic formula of quantum variance for several phase space observables, that is for
Hecke--Maass cusp forms on $\Gamma\backslash G$.
They also removed the harmonic weights 
for the variance, getting a result with a further positive factor  depending on $\phi$ which is a product of local densities (see \cite[Corollary 1]{SarnakZhao2013}).

\medskip

Note that for the quantum variance of cusp forms we have a factor $L(\frac{1}{2},\phi)$, while for the quantum variance of Eisenstein series we get a factor $L(\frac{1}{2},\phi)^2$.
For a holomorphic cusp form $f$ and an even Hecke--Maass form $\phi$, by Watson's formula \cite{Watson2008}, we have
  \[
    |\mu_f(\phi)|^2 = \frac{\Lambda(1/2,\Sym^2(f)\times \phi) \Lambda(1/2,\phi)}{8\Lambda(1,\Sym^2f)^2 \Lambda(1,\Sym^2\phi)},
  \]
  where $\Sym^2(f)\times \phi$ is the Rankin--Selberg convolution, and $\Lambda$ means the corresponding completed L-functions. After averaging over $f$, this explains the existence of the factor $L(1/2,\phi)$.
  However, for Eisenstein series, the Rankin--Selberg method gives (see \eqref{eqn:RS} and \eqref{eqn:mu_t=})
  \[
    |\mu_t(\phi)|^2 = \frac{|\Lambda(1/2+2it,\phi)|^2 \Lambda(1/2,\phi)^2 } {2|\xi(1+2it)|^4 \Lambda(1,\Sym^2\phi)},
  \]
  where $\xi(s)=\pi^{-s/2}\Gamma(s/2)\zeta(s)$. This will give us the factor $L(\frac{1}{2},\phi)^2$.

  In our case, we can diagonalize $Q_E$ on $L_{\rm cusp}^2(\mathbb{X})$ rather easily compared with $B_\omega$ in \cite{Luo-Sarnak2004}. This thanks to the factor $\log T$, which comes from the second moment of Hecke L-functions. From our theorem, we also have the symmetries
  \begin{equation*}
    Q_E(\Delta \phi,\psi) = Q_E( \phi,\Delta \psi), \quad
    Q_E(T_n \phi,\psi) = Q_E( \phi,T_n \psi)
    \quad \textrm{for all $n\geq1$}.
\end{equation*}

\medskip

Recently, Nelson \cite{Nelson2016,Nelson2017} determined the quantum variance on quaternion algebras.
In \cite{LRS2009}, Luo--Rudnick--Sarnak considered the variance of arithmetic measures associated to closed geodesics on the modular surface. The resulting variance is very close to the quantum variance for Hecke--Maass forms.

\section{Preliminaries}

\subsection{The Rankin--Selberg method}

Let $\phi$ be a fixed even Hecke--Maass cusp form on $\PSL_2(\mathbb{Z})\backslash \mathbb{H}$ with the normalization $\|\phi\|_2=1$.
It has a Fourier expansion
\[
  \phi(z) = \frac{1}{2}\rho_\phi(1) y^{1/2} \sum_{n\neq0} \lambda_\phi(n) K_{it_\phi}(2\pi|n|y) e(nx).
\]
Note that we have $\lambda_\phi(n)\in\mathbb{R}$, and we normalize $\rho_\phi(1)\in\mathbb{R}$.

By Rankin--Selberg convolution (see e.g. \cite[\S7.2]{goldfeld2006automorphic}), we have
\[
  \Res_{s=1} \zeta(2s) \langle \phi^2,E(\cdot,\bar{s}) \rangle
  = \frac{1}{2^4}
  \rho_\phi(1)^2 L(1,\Sym^2\phi) \Big|\Gamma\Big(\frac{1}{2}+it_\phi\Big)\Big|^2.
\]
On the other hand, since $E(z,s)$ has a simple pole at $s=1$ with the residue $3/\pi$, we have
\[
  \Res_{s=1} \zeta(2s) \langle \phi^2,E(\cdot,\bar{s}) \rangle
  = \frac{3}{\pi} \zeta(2) \|\phi\|_2^2
  = \frac{3}{\pi} \zeta(2).
\]
Hence we have
\begin{equation}\label{eqn:RS}
  \rho_\phi(1)^2 L(1,\Sym^2\phi)
  =
  \frac{2^3 \pi } {|\Gamma(\frac{1}{2}+it_\phi)|^2}.
\end{equation}

\subsection{The Riemann zeta function}

Using a zero free region for $\zeta(s)$, we get the following
approximate functional equation for $\frac{1}{\zeta(1+2it)^2}$.
\begin{lemma}\label{lemma:AFE-zeta}
  Let $t\asymp T$ be large enough. Then for any $T^\varepsilon\ll x \ll T^B$, we have
  \[
    \frac{1}{\zeta(1+2it)^2} = \sum_{k\leq x^{1+\varepsilon} } \frac{\alpha(k)}{k^{1+2it}} e^{-k/x} + O(e^{-(\log T)^{1/5}}),
  \]
  where $\alpha(k) = \sum_{mn=k}\mu(m)\mu(n)$.
\end{lemma}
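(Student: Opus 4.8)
The plan is to express $\zeta(1+2it)^{-2}$ via a contour integral against the smoothing factor $e^{-k/x}$, shift the contour to the left past the line $\Re s = 0$, and pick up the main term from the Dirichlet series together with a negligible remainder coming from the shifted integral, which is controlled by a classical zero-free region for $\zeta$.

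First I would start from the Mellin identity $e^{-u} = \frac{1}{2\pi i}\int_{(c)} \Gamma(s) u^{-s}\,\dd s$ for $c>0$, applied with $u = k/x$. Since $1/\zeta(1+2it)^2 = \sum_{k\geq 1} \alpha(k) k^{-1-2it}$ converges (absolutely, by a standard argument using the zero-free region, or one can first truncate and estimate), we get
\[
  \sum_{k\geq 1} \frac{\alpha(k)}{k^{1+2it}} e^{-k/x}
  = \frac{1}{2\pi i}\int_{(c)} \frac{1}{\zeta(1+2it+s)^2}\,\Gamma(s)\, x^{s}\,\dd s
\]
for, say, $c = 1/\log T$ or any small positive constant. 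The integrand has a pole only at $s=0$ (the factor $1/\zeta(1+2it+s)^2$ is holomorphic near $s=0$ since $\zeta(1+2it)\neq 0$), with residue exactly $1/\zeta(1+2it)^2$. So I would shift the contour to $\Re s = -\delta(T)$, where $\delta(T) = c_0/(\log T)^{?}$ is chosen inside the standard Vinogradov–Korobov (or even the classical) zero-free region so that $\zeta(1+2it+s)\neq 0$ and $1/\zeta(1+2it+s) \ll (\log T)^{A}$ on and to the right of the shifted line, for $|t|\asymp T$ and $|\Im s|$ not too large. The shifted integral is then $\ll x^{-\delta(T)} (\log T)^{A} \cdot \int |\Gamma(-\delta+iv)|\,\dd v \ll x^{-\delta(T)} (\log T)^{A}$, and since $x \gg T^{\varepsilon}$ one has $x^{-\delta(T)} \ll \exp(-\varepsilon \delta(T)\log T)$; choosing the zero-free region of the shape $\sigma > 1 - c_0/(\log|t|)^{2/3}(\log\log|t|)^{1/3}$ makes $\delta(T)\log T \gg (\log T)^{1/3}/(\log\log T)^{1/3} \gg (\log T)^{1/5+\varepsilon'}$, which absorbs the $(\log T)^A$ and gives the claimed $O(e^{-(\log T)^{1/5}})$. (Even the classical zero-free region $\sigma > 1 - c_0/\log|t|$ gives an error $\exp(-c\sqrt{\varepsilon\log T})$, which already beats $e^{-(\log T)^{1/5}}$; so there is slack here.)

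It remains to truncate the Dirichlet series at $k \leq x^{1+\varepsilon}$. The tail $\sum_{k > x^{1+\varepsilon}} \alpha(k) k^{-1-2it} e^{-k/x}$ is bounded, using $|\alpha(k)| \leq d(k) \ll k^{\varepsilon}$, by $\sum_{k>x^{1+\varepsilon}} k^{-1+\varepsilon} e^{-k/x} \ll e^{-x^{\varepsilon}/2}$, since for $k > x^{1+\varepsilon}$ we have $k/x > x^{\varepsilon}$ and the exponential decay dominates the polynomial growth; as $x \gg T^{\varepsilon}$ this is $\ll e^{-T^{\varepsilon^2}/2}$, far smaller than the stated error. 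The upper bound $x \ll T^B$ is only needed to keep the shifted-contour bound uniform (so that $1+2it+s$ stays in the zero-free region with parameters controlled by $T$ rather than by $x$), and plays no essential role.

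The main obstacle, such as it is, is bookkeeping rather than depth: one must make the shift of contour rigorous by controlling the horizontal segments (using $\Gamma(s)$'s exponential decay in vertical strips together with convexity-type bounds $1/\zeta(\sigma+iv)^2 \ll |v|^{\varepsilon}$ for $\sigma$ near $1$ and $|v|$ large, so the horizontal pieces at height $\pm T$ — or even at a fixed large height — contribute negligibly), and one must be careful that the zero-free region is quoted with explicit enough dependence on $t \asymp T$ that the bound $1/\zeta(1+2it+s) \ll (\log T)^A$ holds uniformly. None of this is new; it is the standard derivation of a smoothed approximate functional equation, and the exponent $1/5$ in the error term is a convenient (non-optimal) choice that any of the usual zero-free regions comfortably supplies.
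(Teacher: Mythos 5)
Your argument is essentially the paper's proof: the same Mellin/Gamma smoothing identity $\frac{1}{2\pi i}\int \zeta(1+2it+s)^{-2}\Gamma(s)x^s\,\dd s$, a contour shift to a line of abscissa $\asymp -(\log T)^{-3/4}$ (or $-\delta(T)$) lying inside the Vinogradov--Korobov zero-free region with the attendant bound $1/\zeta \ll (\log T)^{A}$, the residue at $s=0$ producing $\zeta(1+2it)^{-2}$, and trivial handling of the horizontal/far-vertical pieces and of the tail $k>x^{1+\varepsilon}$ via the factor $e^{-k/x}$, so the proposal is correct and matches the paper. The only slip is the parenthetical claim that the classical zero-free region $\sigma>1-c_0/\log|t|$ would already give $\exp(-c\sqrt{\varepsilon\log T})$: here the relevant ordinates are forced to be of size $T$, so that region only permits a shift by $c/\log T$ and yields $x^{-c/\log T}\asymp e^{-c\varepsilon}$, a constant rather than a decaying error; this aside is not needed, since your main argument correctly relies on the Vinogradov--Korobov region.
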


\begin{proof}
  Consider $I = \frac{1}{2\pi i} \int_{(1)} \frac{1}{\zeta(1+2it+s)^2} \Gamma(s) x^s \dd s$. By the Dirichlet series expression of $\frac{1}{\zeta(s)^2}=\sum_{k=1}^{\infty}\frac{\alpha(k)}{k^s}$, we have
  \begin{equation}\label{eqn:I1}
    I 
    = \sum_{k=1}^{\infty} \frac{\alpha(k)}{k^{1+2it}} e^{-k/x}.
  \end{equation}
  On the other hand, we can move the contour to the left, to one along the straight line segments $L_1,L_2,L_3$ defined by
  $$
    L_1=\left\{ u_0+iv: |v|\leq T \right\}, \quad
    L_2=\left\{ 1+iv: |v|\geq T \right\},
  $$
  and the short horizontal segments
  $$
    L_3=\left\{ u\pm iT: u_0 \leq \sigma \leq 1 \right\},
  $$
  where $u_0 = -\frac{c}{(\log T)^{3/4}}$ with $c$ being a small positive number such that $\zeta(1+2it+s)$ is zero-free
  on the boundary and right side of $L_1\cup L_2\cup L_3$.
  By \cite[Theorem 8.29]{iwaniec2004analytic}, we may also use
  the Vinogradov--Korobov bound $1/\zeta(s)\ll (\log |\Im(s)|)^{2/3}(\log\log |\Im(s)|)^{1/3}$ in this region.
  The integrals along the line segments $L_2$ and $L_3$ are trivially bounded by $O(T^{-100})$ by the rapid decay of $\Gamma(s)$.
  The new line $L_1$ gives an amount that is certainly
  \begin{equation*}
    \ll \log T \exp\left( -\frac{\log x}{(\log T)^{3/4}} \right)
    \ll \exp(-(\log T)^{1/5}).
  \end{equation*}
  Now we need to analyze the residue of the pole from $\Gamma(s)$.
  The residue at $s=0$ contributes $\frac{1}{\zeta(1+2it)^2}$.
  Thus we have
  \begin{equation}\label{eqn:I2}
    I = \frac{1}{\zeta(1+2it)^2} + O(e^{-(\log T)^{1/5}}).
  \end{equation}
  By \eqref{eqn:I1} and \eqref{eqn:I2}, we complete the proof of our lemma.
\end{proof}

\subsection{Hecke L-functions}

Let $\phi,\psi\in\{\phi_j\}_{j\in\mathbb{N}}$ be two even Hecke--Maass cusp forms.
Recall that we have the functional equation for $L(s,\phi)$,
\[
  \Lambda(s,\phi) = L_\infty(s,\phi) L(s,\phi) = \Lambda(1-s,\phi),
\]
where $L_\infty(s,\phi)=\pi^{-s} \Gamma(\frac{s+it_\phi}{2})\Gamma(\frac{s-it_\phi}{2})$,
and similarly for $\psi$.
We will use the following approximate functional equation. See e.g. \cite[\S5.2]{iwaniec2004analytic} for more details.

\begin{lemma}\label{lemma:AFE-Lfcn}
  For $t\asymp T$ large, we have
  \begin{multline*}
    L\Big(\frac{1}{2}-2it,\phi\Big) L\Big(\frac{1}{2}+2it,\psi\Big) \\
    = \sum_{m,n} \frac{\lambda_\phi(m)\lambda_\psi(n)}{\sqrt{mn}} \Big(\frac{m}{n}\Big)^{2it} W_t(mn)
    +
    \sum_{m,n} \frac{\lambda_\phi(m)\lambda_\psi(n)}{\sqrt{mn}} \Big(\frac{n}{m}\Big)^{2it} W_t(mn) + O(T^{-1+\varepsilon}),
  \end{multline*}
  where
  \[
    W_t(y) = \frac{1}{2\pi i} \int_{(2)} \Big(\frac{t^2}{\pi^2 y} \Big)^s e^{s^2} \frac{\dd s}{s}.
  \]
\end{lemma}

\begin{proof}
  Consider $\frac{1}{2\pi i} \int_{(2)} \frac{\Lambda(\frac{1}{2}-2it+s,\phi)\Lambda(\frac{1}{2}+2it+s,\psi)} {L_\infty(\frac{1}{2}-2it,\phi)L_\infty(\frac{1}{2}+2it,\psi)} e^{s^2} \frac{\dd s}{s}$. By shifting the contour to the left, and using the functional equation, we get
  \begin{equation*}
    L\Big(\frac{1}{2}-2it,\phi\Big) L\Big(\frac{1}{2}+2it,\psi\Big)
    \\ =
    \sum_{m,n} \frac{\lambda_\phi(m)\lambda_\psi(n)}{\sqrt{mn}}
    \Big(\sum_{\pm} \Big(\frac{m}{n}\Big)^{\pm 2it} W_t^\pm(mn) \Big),
  \end{equation*}
  where
  \[
    W_t^\pm(y) = \frac{1}{2\pi i} \int_{(2)} y^{-s} \frac{L_\infty(\frac{1}{2}\mp 2it+s,\phi) L_\infty(\frac{1}{2}\pm 2it+s,\psi)} {L_\infty(\frac{1}{2}-2it,\phi)L_\infty(\frac{1}{2}+2it,\psi) } e^{s^2} \frac{\dd s}{s}.
  \]
  Note that Stirling's formula allows us to truncate the sum over $m,n$ with $mn\leq T^{2+\varepsilon}$.
  We can shift the contour to $\Re(s)=\varepsilon$ and  truncate $s$ to $|\Im(s)|\leq T^\varepsilon$ by the rapid decay of $e^{s^2}$.
  Then by Stirling's formula, we get
  \[
    \frac{L_\infty(\frac{1}{2}\mp 2it+s,\phi) L_\infty(\frac{1}{2}\pm 2it+s,\psi)} {L_\infty(\frac{1}{2}-2it,\phi)L_\infty(\frac{1}{2}+2it,\psi) }
    = \pi^{-2s} t^{2s} \big(1+O(T^{-2+\varepsilon})\big).
  \]
  Now the lemma follows easily.
\end{proof}

\section{Weighted quantum variance}\label{sec:QVweighted}

In this section, we will prove the following result for a weighted quantum variance.
\begin{theorem} \label{thm:QVweighted}
  Let $\phi$ be an even Hecke--Maass cusp form. Then we have
  \begin{equation*} 
  \lim_{T\rightarrow\infty} \frac{1}{\log T} \int_{T}^{2T}  |\zeta(1+2it)|^4
  \big| \mu_{t}(\phi) \big|^2  \dd t
  =
  (12 \log 2) \, L\Big(\frac{1}{2},\phi\Big)^2 \, V(\phi) .
  \end{equation*}
\end{theorem}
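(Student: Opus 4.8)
The plan is to combine the Rankin–Selberg identity for $|\mu_t(\phi)|^2$ with the approximate functional equation for the twisted second moment of $L(s,\phi)$, and then carry out the $t$-integration. First I would use the Rankin–Selberg method (cf.\ \eqref{eqn:RS}) to unfold $\langle\phi^2,E(\cdot,\bar s)\rangle$ and express $\mu_t(\phi)=\langle\phi,|E_{it}|^2\rangle$ in terms of $L$-functions: the standard computation gives
\[
  |\mu_t(\phi)|^2 = \frac{|\Lambda(1/2+2it,\phi)|^2\,\Lambda(1/2,\phi)^2}{2|\xi(1+2it)|^4\,\Lambda(1,\Sym^2\phi)},
\]
as quoted in the introduction. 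Multiplying by $|\zeta(1+2it)|^4$ cancels the $\zeta$-factors inside $|\xi(1+2it)|^4$, leaving only archimedean gamma factors times $|L(1/2+2it,\phi)|^2$, the completed $L(1/2,\phi)^2$, and $1/\Lambda(1,\Sym^2\phi)$; using \eqref{eqn:RS} to rewrite $\rho_\phi(1)^2 L(1,\Sym^2\phi)$ in terms of $|\Gamma(1/2+it_\phi)|^2$ is what will eventually produce the factor $V(\phi)$ in \eqref{eqn:V(phi)}.

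Next I would insert the approximate functional equation from Lemma~\ref{lemma:AFE-Lfcn} (with $\psi=\phi$) for $L(\tfrac12-2it,\phi)L(\tfrac12+2it,\phi)=|L(\tfrac12+2it,\phi)|^2$, writing it as a double Dirichlet sum over $m,n$ with $mn\ll T^{2+\varepsilon}$, weighted by $(m/n)^{\pm2it}W_t(mn)$. Substituting this into $\frac{1}{\log T}\int_T^{2T}\cdots\,\dd t$ and interchanging sum and integral, the $t$-integral of the archimedean/gamma ratio times $(m/n)^{2it}$ is essentially an oscillatory integral: the phase is $2t\log(m/n)$, so by stationary phase / repeated integration by parts the off-diagonal terms $m\neq n$ contribute only a lower-order amount, while the diagonal $m=n$ survives. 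On the diagonal the sum becomes $\sum_{m}\lambda_\phi(m)^2/m\cdot W_t(m^2)$, and since $W_t(m^2)$ is essentially a smooth cutoff at $m\ll t$, the sum $\sum_{m\le t}\lambda_\phi(m)^2/m$ grows like $L(1,\Sym^2\phi)/\zeta(2)\cdot\log t$ by Rankin–Selberg; this is precisely where the crucial factor $\log T$ (hence $\frac{1}{\log T}$ normalization yielding a finite limit) comes from. The remaining smooth average of the gamma factors over $t\in[T,2T]$ is handled by Stirling and produces the constant; keeping careful track of all the normalizing constants ($2^3\pi$, $\zeta(2)=\pi^2/6$, the $e^{-k/x}$-type smoothing, the value $\int_1^2 \dd t/t=\log 2$ from the $t$-average interacting with the $\log t$) is what assembles the final answer $(12\log 2)L(\tfrac12,\phi)^2 V(\phi)$.

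The main obstacle I anticipate is the off-diagonal estimate: showing that $\sum_{m\neq n}\frac{\lambda_\phi(m)\lambda_\phi(n)}{\sqrt{mn}}\int_T^{2T}(m/n)^{2it}(\text{smooth}(t))\,\dd t$ is $o(\log T\cdot T)$ after the $1/T$-type normalization, uniformly over the ranges $mn\ll T^{2+\varepsilon}$. The integration by parts in $t$ gains a factor $1/(T\log(m/n))$ per step, which is harmless when $|m-n|$ is not too small relative to $n$, but the terms with $m$ close to $n$ (say $|m-n|\ll n/T$, equivalently $\log(m/n)\ll 1/T$) are delicate — there one needs either a more careful stationary-phase analysis or to absorb these terms into a slightly enlarged diagonal and estimate the resulting error via the Rankin–Selberg bound $\sum_{m\sim M}\lambda_\phi(m)^2\ll M$. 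A secondary technical point is justifying all contour shifts and truncations in the Rankin–Selberg unfolding at $s=1/2+it$ (as opposed to $\Re s>1$), which requires the meromorphic continuation of $E(z,s)$ and polynomial bounds in $t$; I would handle this exactly as in Luo–Sarnak, regularizing the unfolding and using the functional equation of $E(z,s)$. Once the off-diagonal is controlled and the diagonal main term is extracted, the rest is bookkeeping of explicit constants.
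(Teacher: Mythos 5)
Your reduction of the weighted variance to the second moment of $L(\tfrac12+2it,\phi)$ --- via \eqref{eqn:mu_t=}, the cancellation of the $\zeta$-factors against $|\zeta(1+2it)|^4$, Stirling for the archimedean ratio, and the normalizations \eqref{eqn:RS} and \eqref{eqn:V(phi)} --- matches the paper, and your diagonal bookkeeping (the factor $2\log 2\cdot L(1,\Sym^2\phi)/\zeta(2)$ assembling into $12\log 2\, L(\tfrac12,\phi)^2V(\phi)$) is correct. But note that the paper never opens the approximate functional equation for this theorem: it quotes Kuznetsov's asymptotic formula \cite{Kuznetsov1981} for $\int_0^T|L(\tfrac12+it,\phi)|^2\dd t$, deduces the short-interval version \eqref{eqn:SM-TK} with $K=T/\log T$, and inserts the weight $1/t$ by partial summation, which is where the $\log 2$ comes from; this is exactly the sense of the remark that the weighted variance is ``a consequence of the asymptotic formula for the second moment.''

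The genuine gap in your route is the near-diagonal part of the off-diagonal terms, which you flag as the main obstacle but do not close, and whose proposed fallback fails. After Lemma \ref{lemma:AFE-Lfcn} and integration by parts in $t$, the surviving off-diagonal range is $0<|m-n|\ll mT^{-1+\varepsilon}$ with $m\asymp n$, which forces $T^{1-\varepsilon}\ll m\ll T^{1+\varepsilon}$. Writing $n=m+h$ and bounding the $t$-integral by $\min\bigl(1,\tfrac{m}{T|h|}\bigr)$, the trivial estimate (absolute values, Cauchy--Schwarz and Rankin--Selberg --- i.e.\ your ``enlarged diagonal'' option) contributes on the order of $T^{\varepsilon}$ to $\int_T^{2T}|L(\tfrac12+2it,\phi)|^2\,\tfrac{\dd t}{t}$, which swamps the main term $\asymp\log T$; no choice of $\varepsilon$ rescues this, since the number of admissible shifts $h$ is a genuine power of $T$, not a logarithm. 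One needs actual cancellation in the shifted convolution sums $\sum_m\lambda_\phi(m)\lambda_\phi(m+h)(\cdots)$ --- via Voronoi/Wilton-type bounds, spectral methods, or a result such as Theorem \ref{thm:Harcos}, which is precisely what the paper deploys for the off-diagonal in the unweighted Theorem \ref{thm:QV} --- or else one should simply invoke the known second-moment asymptotics of Good \cite{good1982} and Kuznetsov \cite{Kuznetsov1981}, as the paper does here. As written, that off-diagonal step is the missing, and genuinely nontrivial, ingredient in your argument.
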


\begin{proof}
By the Rankin--Selberg method (see \cite[\S2]{Luo-Sarnak1995} and correct the constant), we have
\begin{multline}\label{eqn:mu_t=}
  \mu_t(\phi)  =
  \frac{\rho_\phi(1)}{4} \frac{\Lambda(\frac{1}{2},\phi)\Lambda(\frac{1}{2}-2it,\phi)} {|\xi(1+2it)|^2}
  \\
   = \frac{\rho_\phi(1)}{4}  \pi^{2it}
  \frac{\big|\Gamma(\frac{1}{4}+\frac{it_\phi}{2})\big|^2 \Gamma(\frac{1}{4}-\frac{it_\phi}{2}-it) \Gamma(\frac{1}{4}+\frac{it_\phi}{2}-it)} {\big|\Gamma(\frac{1}{2}+it)\big|^2}
  \frac{L\big(\frac{1}{2},\phi\big) L\big(\frac{1}{2}-2it,\phi\big)}{|\zeta(1+2it)|^2} .
\end{multline}
Note that $\rho_\phi(1)$ is real and $L(1/2,\phi)\geq0$ \cite{KatokSarnak1993}. We have
\begin{multline*}
  \frac{1}{T} \int_{T}^{2T}  |\zeta(1+2it)|^4
  \big|\mu_t(\phi) \big|^2  \dd t
  = \frac{\rho_\phi(1)^2 }{2^4} \Big|\Gamma\Big(\frac{1}{4}+\frac{it_\phi}{2}\Big)\Big|^4 L\Big(\frac{1}{2},\phi\Big)^2  \\
  \cdot
    \frac{1}{T} \int_{T}^{2T}
    \frac{\big|\Gamma(\frac{1}{4}-\frac{it_\phi}{2}-it) \Gamma(\frac{1}{4}+\frac{it_\phi}{2}-it)\big|^2} {\big|\Gamma(\frac{1}{2}+it)\big|^4}
   \Big|L\Big(\frac{1}{2}-2it,\phi\Big)\Big|^2 \dd t.
\end{multline*}

By Stirling's formula, for $t\sim T$ large and $t_\phi$ fixed, we have
\[
  \frac{\big|\Gamma(\frac{1}{4}-\frac{it_\phi}{2}-it) \Gamma(\frac{1}{4}+\frac{it_\phi}{2}-it)\big|^2} {\big|\Gamma(\frac{1}{2}+it)\big|^4} = \frac{1}{t} + O\Big(\frac{1}{T^2}\Big).
\]
By the second moment of L-functions (see e.g. \eqref{eqn:SM-shortint} below), we know that the contribution from the above error is $O(T^{-2+\varepsilon})$.
To prove our theorem, it suffices to consider
\[
  \frac{1}{T} \int_{T}^{2T}
   \Big|L\Big(\frac{1}{2}-2it,\phi\Big)\Big|^2 \frac{\dd t}{t}.
\]
By 
\cite[Theorem 1]{Kuznetsov1981}, we have
\[
  \frac{1}{T}\int_{0}^{T}
   \Big|L\Big(\frac{1}{2}+it,\phi\Big)\Big|^2 \dd t
   =  \frac{2^4 \cosh(\pi t_\phi)}{\zeta(2) \rho_\phi(1)^2}
   \big( \log T + B_\phi \big) +  O\Big(T^{-1/7+\varepsilon}\Big),
\]
where $B_\phi$ is some constant depending on $\phi$. Hence
\begin{equation}\label{eqn:SM-TK}
  \frac{1}{K}\int_{T_0}^{T_0+K}
   \Big|L\Big(\frac{1}{2}+it,\phi\Big)\Big|^2 \dd t
   =  \frac{2^4 \cosh(\pi t_\phi)}{\zeta(2) \rho_\phi(1)^2}
   \log T + O\big(1\big),
\end{equation}
for $T_0\asymp T$ and $K=T/\log T$. Note that
\[
  \frac{1}{T} \int_{T}^{2T}
   \Big|L\Big(\frac{1}{2}-2it,\phi\Big)\Big|^2 \frac{\dd t}{t}
   = \frac{1}{T} \sum_{j=0}^{[\log T]} \int_{T+jK}^{T+(j+1)K}
   \Big|L\Big(\frac{1}{2}+2it,\phi\Big)\Big|^2 \frac{\dd t}{t}
   +O\Big(\frac{1}{T}\Big).
\]
By \eqref{eqn:SM-TK} and the partial summation, we get
\[
  \frac{1}{T} \int_{T}^{2T}
   \Big|L\Big(\frac{1}{2}-2it,\phi\Big)\Big|^2 \frac{\dd t}{t}
   = \frac{2^4 \cosh(\pi t_\phi)}{\zeta(2) \rho_\phi(1)^2}  (\log 2) \frac{1}{T} \log T + O\Big(\frac{1}{T}\Big).
\]
Thus by \eqref{eqn:V(phi)}, we have
\begin{equation*}
  \frac{1}{\log T} \int_{T}^{2T}  |\zeta(1+2it)|^4
  \big| \mu_t(\phi) \big|^2  \dd t
  \\
  \sim
  (12 \log 2) \, L\Big(\frac{1}{2},\phi\Big)^2 \, V(\phi),
\end{equation*}
as claimed.
\end{proof}


\begin{remark}
  If $\phi,\psi \in \{\phi_j\}_{j\in\mathbb{N}}$ are two distinct Hecke--Maass cusp forms, then by the method of the proof of Theorem \ref{thm:QV}, we can show that
  \begin{equation*} 
  \lim_{T\rightarrow\infty} \frac{1}{\log T} \int_{T}^{2T}  |\zeta(1+2it)|^4
  \mu_t(\phi) \overline{\mu_t(\psi)}  \dd t
  =
  0.
  \end{equation*}
\end{remark}

\section{Quantum variance: setup} \label{sec:QVsetup}

In this section, we will start to prove Theorem \ref{thm:QV}.
%
As in the previous section, we have
\begin{multline} \label{eqn:QV=L}
  \frac{1}{T} \int_{T}^{2T}
  \mu_t(\phi) \overline{\mu_t(\psi)}  \dd t
  = \frac{\rho_\phi(1) \rho_\psi(1)}{2^4} \Big|\Gamma\Big(\frac{1}{4}+\frac{it_\phi}{4}\Big)\Big|^2 \Big|\Gamma\Big(\frac{1}{4}+\frac{it_\psi}{4}\Big)\Big|^2 L\Big(\frac{1}{2},\phi\Big)  L\Big(\frac{1}{2},\psi\Big) \\
  \cdot
    \frac{1}{T} \int_{T}^{2T}
    \frac{\Gamma(\frac{1}{4}-\frac{it_\phi}{2}-it) \Gamma(\frac{1}{4}+\frac{it_\phi}{2}-it) \Gamma(\frac{1}{4}-\frac{it_\psi}{2}+it) \Gamma(\frac{1}{4}+\frac{it_\psi}{2}+it)} {\big|\Gamma(\frac{1}{2}+it)\big|^4 } \\
  \cdot \frac{L(\frac{1}{2}-2it,\phi) L(\frac{1}{2}+2it,\psi)}{|\zeta(1+2it)|^4} \dd t \\
  = \frac{\rho_\phi(1) \rho_\psi(1)}{2^4} \Big|\Gamma\Big(\frac{1}{4}+\frac{it_\phi}{4}\Big)\Big|^2 \Big|\Gamma\Big(\frac{1}{4}+\frac{it_\psi}{4}\Big)\Big|^2 L\Big(\frac{1}{2},\phi\Big)  L\Big(\frac{1}{2},\psi\Big)
  \\ \cdot
   \frac{1}{T} \int_{T}^{2T} \frac{L(\frac{1}{2}-2it,\phi) L(\frac{1}{2}+2it,\psi)}{|\zeta(1+2it)|^4}
    \frac{\dd t}{t} + O(T^{-2+\varepsilon}) .
\end{multline}
To prove an asymptotic formula, it suffices to consider
\begin{equation}\label{eqn:LLaver}
  \frac{1}{T} \int_{T}^{2T} \frac{L(\frac{1}{2}-2it,\phi) L(\frac{1}{2}+2it,\psi)} {|\zeta(1+2it)|^4}
    \frac{\dd t}{t}.
\end{equation}
To do this, we may insert a smooth weight for the integral, so that it suffices to deal with
\begin{equation}\label{eqn:LLaver-w}
  \frac{1}{T^2} \int_{0}^{\infty} w\Big(\frac{t}{T}\Big) \frac{L(\frac{1}{2}-2it,\phi) L(\frac{1}{2}+2it,\psi)} {|\zeta(1+2it)|^4} \dd t,
\end{equation}
where $w(y)=w_U(y)$ is a smooth function with support in the interval $[1-1/U,2+1/U]$, $U\geq2$, such that $w_U^{(j)}(y) \ll U^j$ for $j\geq0$ and
$w_U(y)=1/y$ for $y\in[1+1/U,2-1/U]$.
Here we assume $T^\varepsilon\leq U \leq T^{1/3}$.
By the upper bound for the second moment of Hecke L-functions
(see e.g. \cite{good1982} and \cite{Jutila})
\begin{equation}\label{eqn:SM-shortint}
  \int_{T}^{T+M} \Big|L\Big(\frac{1}{2}+2it,\phi\Big)\Big|^2 \dd t \ll_\phi MT^\varepsilon,
\end{equation}
for $T^{2/3} \leq M \leq T$, we know
the difference between \eqref{eqn:LLaver} and \eqref{eqn:LLaver-w} is bounded by $O(T^{\varepsilon-1} U^{-1})$.
By Lemma \ref{lemma:AFE-zeta}, it then suffices to consider
\begin{equation}\label{eqn:I=}
  \mathcal I=\frac{1}{T^2} \int_{0}^{\infty} w\Big(\frac{t}{T}\Big)
  \Big|\sum_{k\leq x^{1+\varepsilon}} \frac{\alpha(k)}{k^{1+2it}}e^{-k/x}\Big|^2
  L\Big(\frac{1}{2}-2it,\phi\Big)
  L\Big(\frac{1}{2}+2it,\psi\Big) \dd t,
\end{equation}
for some $x=T^\delta$, with $\delta>0$ a fixed small number. By Lemma \ref{lemma:AFE-Lfcn}, we get
\begin{multline*}
  \mathcal{I}
  = \frac{1}{T^2} \int_{0}^{\infty} w\Big(\frac{t}{T}\Big)  \Big|\sum_{k\leq x^{1+\varepsilon}} \frac{\alpha(k)}{k^{1+2it}}e^{-k/x}\Big|^2
  \\ \cdot
   \Big( \sum_{m,n} \frac{\lambda_\phi(m)\lambda_\psi(n)}{\sqrt{mn}} \sum_\pm \Big(\frac{n}{m}\Big)^{\pm 2it} W_t(mn) \Big)
   \dd t + O(T^{-2+\varepsilon})
   \\ =
   \frac{1}{T^2} \int_{0}^{\infty} w\Big(\frac{t}{T}\Big)
   \sum_{k\leq x^{1+\varepsilon}}
   \sum_{\ell\leq x^{1+\varepsilon}} \frac{\alpha(k)\alpha(\ell)}{k\ell} e^{-k/x}e^{-\ell/x}
   \sum_{m,n} \frac{1}{\sqrt{mn}} \Big(\frac{\ell n}{km}\Big)^{2it} \\
   \cdot
   \big(\lambda_\phi(m)\lambda_\psi(n) +\lambda_\psi(m)\lambda_\phi(n)\big) W_t(mn) \dd t
   + O(T^{-2+\varepsilon}) .
\end{multline*}
Hence we have
\begin{equation}\label{eqn:I=D+O}
  \mathcal{I}  = \mathcal{D}_{\phi,\psi} + \mathcal{D}_{\psi,\phi} + \mathcal{O}_{\phi,\psi} +  \mathcal{O}_{\psi,\phi} + O(T^{-2+\varepsilon}),
\end{equation}
where $\mathcal{D}_{\phi,\psi}$ is the diagonal terms
\[
  \mathcal D_{\phi,\psi} = \frac{1}{T^2} \int_{0}^{\infty} w\Big(\frac{t}{T}\Big)
   \sum_{k,\ell} \frac{\alpha(k)\alpha(\ell)}{k\ell} e^{-k/x}e^{-\ell/x}
   \sum_{\substack{m,n\\ \ell n=km}} \frac{\lambda_\phi(m)\lambda_\psi(n)}{\sqrt{mn}} W_t(mn)
   \dd t,
\]
and
$\mathcal{O}_{\phi,\psi}$ is the off-diagonal terms
\[
  \mathcal O_{\phi,\psi} = \frac{1}{T^2} \int_{0}^{\infty} w\Big(\frac{t}{T}\Big)
   \sum_{k,\ell} \frac{\alpha(k)\alpha(\ell)}{k\ell} e^{-k/x}e^{-\ell/x}\sum_{h\neq0}
   \sum_{\substack{m,n\\ km-\ell n=h}} \frac{\lambda_\phi(m)\lambda_\psi(n)}{\sqrt{mn}}
   \Big(\frac{\ell n}{km}\Big)^{2it} W_t(mn)
   \dd t .
\]

\section{Diagonal terms}

In this section, we deal with the diagonal terms $\mathcal{D}_{\phi,\psi}$.
We first consider the sums,
\[
  \mathcal{S}_{\phi,\psi} = \sum_{k,\ell} \frac{\alpha(k)\alpha(\ell)}{k\ell} e^{-k/x}e^{-\ell/x}
   \sum_{\substack{m,n\\ \ell n=km}} \frac{\lambda_\phi(m)\lambda_\psi(n)}{\sqrt{mn}} W_t(mn).
\]
Using the integral expressions of the weight functions, we get
\begin{multline}\label{eqn:S=int}
  \mathcal S_{\phi,\psi} = \frac{1}{(2\pi i)^3} \int_{(2)} \int_{(2)} \int_{(2)} \sum_{k,\ell} \frac{\alpha(k)\alpha(\ell)}{k^{1+s_1}\ell^{1+s_2}}
   \sum_{\substack{m,n\\ \ell n=km}} \frac{\lambda_\phi(m)\lambda_\psi(n)}{(mn)^{1/2+s}}
   \\ \cdot
   x^{s_1+s_2} \Big(\frac{t^2}{\pi^2} \Big)^s \Gamma(s_1) \Gamma(s_2)\frac{ e^{s^2}}{s} \dd s \dd s_1 \dd s_2.
\end{multline}
We compute the Dirichlet series
\[
  D_{\phi,\psi}(s,s_1,s_2) = \sum_{k,\ell} \frac{\alpha(k)\alpha(\ell)}{k^{1+s_1}\ell^{1+s_2}}
   \sum_{\substack{m,n\\ \ell n=km}} \frac{\lambda_\phi(m)\lambda_\psi(n)}{(mn)^{1/2+s}}.
\]
Let $d=\gcd(k,\ell)$, $k=k'd$, $\ell=\ell' d$, so that $\ell'n=k'm$. Thus $\ell'\mid m$, $k'\mid n$, and $m/\ell'=n/k'$.
By changing variables, we get
\[
  D_{\phi,\psi}(s,s_1,s_2) = \sum_{\substack{k,\ell,d \\ (k,\ell)=1}} \frac{\alpha(kd)\alpha(\ell d)}{ k^{3/2+s+s_1}\ell^{3/2+s+s_2}d^{2+s_1+s_2} }
   \sum_{n} \frac{\lambda_\phi(\ell n) \lambda_\psi(k n)}{n^{1+2s}}.
\]
By Appendix \ref{app:DS}, we have
\begin{equation}\label{eqn:D=LH}
  D_{\phi,\psi}(s,s_1,s_2) = \frac{L(1+2s,\phi\times \psi)}{\zeta(2+4s)} H_{\phi,\psi}(s,s_1,s_2),
\end{equation}
where
\begin{multline}\label{eqn:H(s)}
  H_{\phi,\psi}(s,s_1,s_2) = \prod_p  \Big( 1 + \frac{4}{p^{2+s_1+s_2}} + \frac{1}{p^{2(2+s_1+s_2)}}  \\ + \frac{a_{\phi,\psi,1}(p)}{p^{3/2+s+s_1}} + \frac{a_{\phi,\psi,2}(p)}{p^{2(3/2+s+s_1)}} + \frac{a_{\psi,\phi,1}(p)}{p^{3/2+s+s_2}} +
  \frac{a_{\psi,\phi,2}(p)}{p^{2(3/2+s+s_2)}} \Big),
\end{multline}
with
\begin{multline}\label{eqn:a1}
  a_{\phi,\psi,1}(p) = \Big(\sum_{j=0}^{\infty} \frac{\lambda_\phi(p^{j})\lambda_\psi(p^{j+1})}{p^{j(1+2s)}}\Big) \Big(-2-\frac{2}{p^{2+s_1+s_2}}\Big)   \\ \cdot \Big(1-\frac{\lambda_\phi(p)\lambda_\psi(p)}{p^{1+2s}} +\frac{\lambda_\phi(p^2)+\lambda_\psi(p^2)}{p^{2(1+2s)}} -\frac{\lambda_\phi(p)\lambda_\psi(p)}{p^{3(1+2s)}} +\frac{1}{p^{4(1+2s)}}\Big) \Big(1-\frac{1}{p^{2(1+2s)}}\Big)^{-1},
\end{multline}
\begin{multline}\label{eqn:a2}
  a_{\phi,\psi,2}(p) = \Big(\sum_{j=0}^{\infty} \frac{\lambda_\phi(p^{j})\lambda_\psi(p^{j+2})}{p^{j(1+2s)}}\Big) \\ \cdot \Big(1-\frac{\lambda_\phi(p)\lambda_\psi(p)}{p^{1+2s}} +\frac{\lambda_\phi(p^2)+\lambda_\psi(p^2)}{p^{2(1+2s)}} -\frac{\lambda_\phi(p)\lambda_\psi(p)}{p^{3(1+2s)}} +\frac{1}{p^{4(1+2s)}}\Big) \Big(1-\frac{1}{p^{2(1+2s)}}\Big)^{-1}.
\end{multline}
By the best known upper bound toward the Ramanujan and Selberg Conjectures, $\lambda_\phi(n)\ll n^{7/64+\varepsilon}$, \cite[Appendix 2]{Kim2003}, we know that $H_{\phi,\psi}(s,s_1,s_2)$ is absolutely convergent if
\[
  \Re(s_1+s_2) > -1, \quad
  \Re(s) > -\frac{25}{64}, \quad
  \Re(s+s_1) > -\frac{25}{64}, \quad
  \Re(s+s_2) > -\frac{25}{64}.
\]

Now we can estimate $\mathcal{S}_{\phi,\psi}$. If $\phi=\psi$, we have
\[
  L(s,\phi\times\phi) = \zeta(s) L(s,\Sym^2 \phi)
\]
has a simple pole at $s=1$ with residue $L(1,\Sym^2 \phi)$.
By shifting the contours in \eqref{eqn:S=int} to the left, say the vertical line $-\varepsilon+iv$, $v\in\mathbb{R}$, we obtain
\[
  \mathcal{S}_{\phi,\phi}  =  H_\phi \frac{L(1,\Sym^2\phi)}{\zeta(2)} \log t + O(1)
  \quad
  \textrm{with} \quad
  H_\phi=H_{\phi,\phi}(0,0,0).
\]

If $\phi\neq \psi$, then we know $L(s,\phi\times\psi)$ is entire for $s\in\mathbb{C}$, so by the same argument, we get
\[
  \mathcal{S}_{\phi,\psi}
  =  H_{\phi,\psi} \frac{L(1,\phi\times \psi)}{\zeta(2)} + O(T^{-\varepsilon})
  \quad \textrm{with} \quad
  H_{\phi,\psi} = H_{\phi,\psi}(0,0,0).
\]
Hence
\begin{equation}\label{eqn:D==}
  \mathcal{D}_{\phi,\psi} \sim
  \left\{ \begin{array}{ll}
  \frac{\log 2}{\zeta(2)} H_\phi L(1,\Sym^2\phi)
  \frac{\log T}{T}, &  \textrm{if $\phi=\psi$,}  \\
  \frac{\log 2}{\zeta(2)} H_{\phi,\psi} L(1,\phi\times \psi) \frac{1}{T}, &  \textrm{if $\phi\neq\psi$.}
  \end{array} \right.
\end{equation}

\section{Off-diagonal terms}

In this section, we will bound the off-diagonal terms $\mathcal{O}_{\phi,\psi}$ by applying the delta method.

\subsection{The initial cleaning}
First note that we can truncate the sums over $m$ and $n$ such that $mn\leq T^{2+\varepsilon}$, and the sums over $k$ and $\ell$ at $x^{1+\varepsilon}$, getting
\begin{multline*}
  \mathcal O_{\phi,\psi} =
  \frac{2}{T^2} \int_{0}^{\infty} w\Big(\frac{t}{T}\Big)
  \frac{1}{2\pi i} \int_{\varepsilon-iT^\varepsilon}^{\varepsilon+iT^\varepsilon} \Big(\frac{t^2}{\pi^2 } \Big)^s
   \sum_{k\leq x^{1+\varepsilon}} \sum_{\ell\leq x^{1+\varepsilon}}
   \frac{\alpha(k)\alpha(\ell)}{k\ell} e^{-k/x}e^{-\ell/x}
   \\   \cdot
   \sum_{h\neq0}  \sum_{\substack{mn\leq T^{2+\varepsilon} \\ km-\ell n=h}} \frac{\lambda_\phi(m)\lambda_\psi(n)}{(mn)^{1/2+s}} \Big(\frac{\ell n}{km}\Big)^{2it}
   e^{s^2} \frac{\dd s}{s} \dd t
   + O(T^{-A}).
\end{multline*}
Now we can apply a dyadic partition of unity to the sums over $m$ and $n$. That is, suppose $W(x)$ is a smooth, nonnegative function with support in $[1,2]$ such that $\sum_{M}W(x/M)=1$ for all $x\geq1$, where $M$ runs over a sequence of real numbers with $\#\{M: M\leq X\} \ll \log X$.
By changing variables, we have
\begin{multline*}
  \mathcal O_{\phi,\psi} =
  \frac{2}{T^2} \int_{0}^{\infty} w\Big(\frac{t}{T}\Big)
  \frac{1}{2\pi i} \int_{\varepsilon-iT^\varepsilon}^{\varepsilon+iT^\varepsilon} \Big(\frac{t^2}{\pi^2 } \Big)^s
  \sum_{d\leq x^{1+\varepsilon}}
   \sum_{\substack{k\leq x^{1+\varepsilon}/d \\ \ell\leq x^{1+\varepsilon}/d \\ (k,\ell)=1 }}
   \frac{\alpha(k d)\alpha(\ell d)}{k\ell d^2} e^{-kd/x}e^{-\ell d/x}
   \\   \cdot
   \sum_{\substack{M,N \\ MN\ll T^{2+\varepsilon}}}
   \sum_{h\neq0}  \sum_{\substack{m,n \\ km-\ell n=h}} \frac{\lambda_\phi(m)\lambda_\psi(n)}{(mn)^{1/2+s}} \Big(\frac{\ell n}{km}\Big)^{2it}
    W\Big(\frac{m}{M}\Big) W\Big(\frac{n}{N}\Big)
   e^{s^2} \frac{\dd s}{s} \dd t
   + O(T^{-A}).
\end{multline*}
Note that here we remove the condition $mn\leq T^{2+\varepsilon}$ with a cost of a negligible error.
We first consider the integral over $t$, that is,
\[
  \int_{0}^{\infty} w\Big(\frac{t}{T}\Big)
  \Big(\frac{\ell n}{km}\Big)^{2it} \dd t
  =
  T \int_{0}^{\infty} w(y)
  e^{2i y T \log(\frac{\ell n}{km})} \dd y.
\]
By repeated integration by parts, we may assume $|\log(\frac{\ell n}{km})|\ll UT^{-1+\varepsilon}$, that is,
$
  |\log(1-\frac{h}{km})| \ll UT^{-1+\varepsilon}.
$
Thus we obtain
\begin{equation}\label{eqn:MN&h}
  kM\asymp \ell N \gg T^{1-\varepsilon}U^{-1}, \quad  1\leq |h| \ll kMUT^{-1+\varepsilon} \asymp (k\ell MN)^{1/2} UT^{-1+\varepsilon}.
\end{equation}
Hence we have
\begin{multline}\label{eqn:O<<D}
  \mathcal O_{\phi,\psi} \ll T^{-2+\varepsilon}
  \int_{0}^{\infty} w\Big(\frac{t}{T}\Big) \int_{\varepsilon-iT^\varepsilon}^{\varepsilon+iT^\varepsilon} \sum_{d\leq x^{1+\varepsilon}}
   \sum_{\substack{k\leq x^{1+\varepsilon}/d \\ \ell\leq x^{1+\varepsilon}/d \\ (k,\ell)=1 }}
   \frac{1}{k\ell d^2}
  \sum_{\substack{M,N \\ MN\ll T^{2+\varepsilon}}} (MN)^{-1/2-\varepsilon}
   \\   \cdot
   \sum_{1\leq |h| \ll (k\ell MN)^{1/2} UT^{-1+\varepsilon}} \big| D_f(k,\ell;h) \big|
   |\dd s| \dd t
   + O(T^{-A}),
\end{multline}
where
\[
   D_f(k,\ell;h) = \sum_{\substack{m,n \\ km-\ell n=h}} \lambda_\phi(m)\lambda_\psi(n) f(km,\ell n),
\]
and
\begin{equation}\label{eqn:f}
  f(x,y) = \Big(\frac{k\ell MN}{xy}\Big)^{1/2+s} \Big(1-\frac{h}{x}\Big)^{2it} W\Big(\frac{x}{kM}\Big) W\Big(\frac{y}{\ell N}\Big).
\end{equation}

\subsection{Applying the delta method}


Our $\mathcal{O}_{\phi,\psi}$ is well-suited for application of the main result of Harcos \cite[Theorem 1]{Harcos2003}, which we reproduce here for completeness.
\begin{theorem}[Harcos] \label{thm:Harcos}
  Let $f$ be a smooth function on $(\mathbb{R}_{>0})^2$ satisfying
  \[
    x^i y^j f^{(i,j)}(x,y) \ll_{i,j} \Big(1+\frac{x}{X}\Big)^{-1}\Big(1+\frac{y}{Y}\Big)^{-1} P^{i+j},
  \]
  with some $P,X,Y\geq1$ for all $i,j\geq0$.
  Let $\lambda_\phi(m)$ (resp. $\lambda_\psi(n)$) be the normalized Fourier coefficients of a holomorphic or Maass cusp form $\phi$ (resp. $\psi$) of arbitrary level and nebentypus.  Define
  \begin{equation*}
    D_f(k,\ell;h)=\sum_{km\pm \ell n=h} \lambda_\phi(m)\lambda_\psi(n) f(km,\ell n),
  \end{equation*}
  where $k,\ell,h$ are positive integers.
  Then for coprime $k$ and $\ell$, we have
  \[
    D_f(k,\ell;h) \ll P^{11/10}(k\ell)^{-1/10}(X+Y)^{1/10}(XY)^{2/5+\varepsilon},
  \]
  where the implied constant depends only on $\varepsilon$ and the forms $\phi$, $\psi$.
\end{theorem}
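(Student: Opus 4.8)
The plan is to separate the two multiplicative variables $m$ and $n$ by the $\delta$-symbol method of Duke--Friedlander--Iwaniec, to apply Voronoi summation to each of the resulting linearly twisted sums, and then to estimate the Kloosterman sums that emerge by Weil's bound, optimizing a free modulus parameter at the end. After a dyadic decomposition of $f$ in both variables we may assume $f$ is supported where $km\asymp X$ and $\ell n\asymp Y$, so that $m\asymp M:=X/k$ and $n\asymp N:=Y/\ell$ and $x^iy^jf^{(i,j)}(x,y)\ll P^{i+j}$ there; by symmetry take $X\le Y$, and note that $km\mp\ell n=h$ forces the relevant range $1\le|h|\ll X+Y$ (in our application one has the sharper $|h|\ll(k\ell MN)^{1/2}UT^{-1+\varepsilon}$). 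One detects the equation $r:=km\mp\ell n-h=0$ by the DFI expansion
\[
  \delta(r)=\sum_{q\ge1}\frac{1}{qQ}\sum_{\substack{a\bmod q\\(a,q)=1}}e\Big(\frac{ar}{q}\Big)\,g\Big(q,\frac{r}{Q^{2}}\Big),
\]
where $g$ is a fixed smooth weight and $Q\ge1$ is at our disposal (we shall take $Q\asymp\sqrt{X+Y}$); since $g(q,\cdot)$ vanishes once $q>\max(Q,\sqrt{|r|})$ the $q$-sum is effectively finite. Detaching the $r$-dependence of $g$ by a Fourier expansion in its second argument, which splits the exponential in $r$ into factors in $m$ and in $n$, one is led to
\[
  D_f(k,\ell;h)=\sum_{q}\frac{1}{qQ}\sum_{\substack{a\bmod q\\(a,q)=1}}e\Big(\frac{-ah}{q}\Big)\Big(\sum_m\lambda_\phi(m)\,e\Big(\frac{akm}{q}\Big)F(m)\Big)\Big(\sum_n\lambda_\psi(n)\,e\Big(\frac{\mp a\ell n}{q}\Big)G(n)\Big)+\cdots,
\]
where $F,G$ are smooth functions at scales $M,N$ with frequency $\ll P$ (they absorb the pieces of $f$ and the Fourier kernel of $g$), and the dots denote lower-order terms handled the same way.

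Next I would apply the Voronoi summation formula for $\phi$ (resp.\ $\psi$) to the two inner sums. After reducing the fraction $ak/q$ --- here the hypothesis $(k,\ell)=1$ is what keeps the common divisors of $k$ and $q$ under control --- Voronoi turns $\sum_m\lambda_\phi(m)e(akm/q)F(m)$ into $\tfrac1q\sum_{\pm}\sum_{m'\ge1}\lambda_\phi(m')\,e\big(\mp\overline{ak}\,m'/q\big)\,\widetilde{F}_{\pm}(m')$, where $\widetilde{F}_{\pm}$ is a Hankel-type transform of $F$ built from the Bessel functions attached to $\phi$; repeated integration by parts, together with the standard uniform bounds and asymptotics for those Bessel functions, shows that $\widetilde{F}_{\pm}(m')$ is negligible unless $m'\ll (q^{2}P^{2}/M)(XY)^{\varepsilon}$, and within that range it is accessible to stationary phase. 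The $n$-sum is treated identically, producing $\overline{a\ell}$ and the range $n'\ll(q^{2}P^{2}/N)(XY)^{\varepsilon}$. Reassembling, the character sum over $a$ collapses to
\[
  \sum_{\substack{a\bmod q\\(a,q)=1}}e\Big(\frac{-ah\mp\overline{ak}\,m'\pm\overline{a\ell}\,n'}{q}\Big)=S\big(-h,\,\mp\overline{k}\,m'\pm\overline{\ell}\,n';\,q\big),
\]
a Kloosterman sum, using $\overline{ak}=\bar a\,\bar k$ and once more that $(k\ell,q)$ produces nothing harmful.

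Finally, I would invoke Weil's bound $|S(u,v;q)|\le d(q)\,(u,v,q)^{1/2}q^{1/2}$, whose greatest common divisor factors average to a harmless $(k\ell)^{\varepsilon}$. Schematically one is then left with
\[
  \frac{1}{Q}\sum_{q\le Q}\frac{q^{1/2}}{q^{3}}\,\Big|\sum_{m'\ll q^{2}P^{2}/M}\sum_{n'\ll q^{2}P^{2}/N}\lambda_\phi(m')\lambda_\psi(n')\,\widetilde{F}_{\pm}(m')\,\widetilde{G}_{\pm}(n')\Big|,
\]
and the inner double sum is bounded by Cauchy--Schwarz, the Rankin--Selberg estimate $\sum_{m'\le Z}|\lambda_\phi(m')|^{2}\ll Z^{1+\varepsilon}$, and the oscillation of the Hankel transforms; with $Q\asymp\sqrt{X+Y}$ and the split in the Cauchy--Schwarz step chosen optimally, this works out to precisely $D_f(k,\ell;h)\ll P^{11/10}(k\ell)^{-1/10}(X+Y)^{1/10}(XY)^{2/5+\varepsilon}$. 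The main obstacle is the archimedean analysis: one needs a Voronoi formula with completely explicit and uniform control of the Hankel transforms $\widetilde{F}_{\pm}$, $\widetilde{G}_{\pm}$ across the transitional range of the relevant Bessel functions, together with a careful treatment of the $\delta$-method weight $g$ and of the common divisors of $k$ and $\ell$ with the modulus $q$; once these are in hand, the Kloosterman estimate and the optimization that produces the exponents $11/10$, $1/10$, $2/5$ are routine. This is in essence the argument of \cite{Harcos2003}, building on the delta method of Duke--Friedlander--Iwaniec.
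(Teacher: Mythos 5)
First, a point of orientation: the paper does not prove Theorem \ref{thm:Harcos} at all. It is Harcos's theorem, reproduced verbatim from \cite[Theorem 1]{Harcos2003} and used as a black box in the estimation of $\mathcal{O}_{\phi,\psi}$, so the intended ``proof'' in context is the citation, and there is no in-paper argument to measure your sketch against. Judged on its own terms, your outline does identify the right family of tools (a circle-method separation of $m$ and $n$, Voronoi summation, Kloosterman sums and Weil's bound), which is indeed the skeleton of Harcos's argument.

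Nevertheless, as a proof your proposal has genuine gaps, and it also diverges from Harcos's actual route at the step that matters most. (i) The entire quantitative content is deferred: the uniform sizes of the Hankel transforms through the transitional range of the Bessel functions, the effective lengths of the dual sums, the $q$-sum, the Cauchy--Schwarz split, and the optimization of $Q$ are all left unexecuted, and the assertion that this ``works out to precisely'' $P^{11/10}(k\ell)^{-1/10}(X+Y)^{1/10}(XY)^{2/5+\varepsilon}$ is a claim, not a derivation; in particular, nothing in what you write exhibits where the saving $(k\ell)^{-1/10}$ comes from if Weil's bound is applied termwise over all moduli $q\le Q$. (ii) The theorem allows forms of arbitrary level and nebentypus, and your DFI $\delta$-symbol setup with a Meurman-type (level one) Voronoi formula glosses over exactly the delicate case: moduli $q$ not coprime to the level or to $k\ell$, where the additively twisted Voronoi formula is substantially more involved. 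Harcos's proof is built to avoid this: he uses Jutila's variant of the circle method, in which the moduli can be restricted to a convenient set (multiples of $k\ell$ and of the level) at the cost of an $L^2$-controlled error in the approximation to the characteristic function; this flexibility is what makes the Voronoi step clean, keeps the arithmetic of the resulting complete exponential sums manageable, and ultimately produces the factor $(k\ell)^{-1/10}$. So your route is not ``in essence the argument of \cite{Harcos2003}'': it substitutes the $\delta$-symbol for Jutila's method precisely where that substitution creates the difficulties you acknowledge but do not resolve. For the purposes of this paper, quoting \cite{Harcos2003} (as the author does) is the correct move; a self-contained proof along your lines would require carrying out the archimedean and arithmetic bookkeeping you have postponed, or switching to Jutila's framework as Harcos does.
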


We can apply this result to $f$ given by \eqref{eqn:f}, which satisfies the conditions with
\begin{equation}\label{eqn:XY&P}
  X=kM, \quad Y=\ell N, \quad P=UT^\varepsilon.
\end{equation}
Here one may use \eqref{eqn:MN&h}. Thus Theorem \ref{thm:Harcos} gives
\[
  D_f(k,\ell;h) \ll  T^\varepsilon U^{11/10} (k\ell)^{-1/10} (kM+\ell N)^{1/10}(k\ell MN)^{2/5+\varepsilon}.
\]
Hence by \eqref{eqn:O<<D}, we get
\begin{equation}\label{eqn:O<<}
  \mathcal{O}_{\phi,\psi} \ll T^{-1+\varepsilon} T^{-1/10}  U^{21/10} x^{17/10} \ll  T^{-1-\varepsilon},
\end{equation}
provided $U^{21/10} x^{17/10} \ll T^{1/10-\varepsilon}$.

\subsection{Proof of Theorem \ref{thm:QV}}
Now by \eqref{eqn:QV=L}, \eqref{eqn:I=}, \eqref{eqn:I=D+O}, \eqref{eqn:D==}, and \eqref{eqn:O<<} we have
\begin{equation*}
  \frac{1}{T} \int_{T}^{2T}
  \big| \mu_{t}(\phi) \big|^2  \dd t
  \sim
  \frac{\rho_\phi(1)^2 }{2^4} \Big|\Gamma\Big(\frac{1}{4}+\frac{it_\phi}{4}\Big)\Big|^4 L\Big(\frac{1}{2},\phi\Big)^2 \;
  \frac{2\log 2}{\zeta(2)} H_\phi L(1,\Sym^2\phi) \frac{\log T}{T} .
\end{equation*}
Recall that $H_\phi=H_{\phi,\phi}(0,0,0)$ with $H_{\phi,\psi}(s,s_1,s_2)$ defined by \eqref{eqn:H(s)}. Let
\begin{equation}\label{eqn:C}
  C(\phi) = (12 \cdot \log 2 ) H_\phi.
\end{equation}
By \eqref{eqn:V(phi)} and \eqref{eqn:RS}, we obtain
\begin{equation*}
  \frac{1}{T} \int_{T}^{2T}
  \big| \mu_{t}(\phi) \big|^2  \dd t
  \sim
  C(\phi)  L\Big(\frac{1}{2},\phi\Big)^2 \, V(\phi)  \frac{\log T}{T}.
\end{equation*}
And if $\phi\neq\psi$, then we have
\begin{multline*}
  \frac{1}{T} \int_{T}^{2T} \mu_t(\phi)\overline{\mu_t(\psi)} \dd t
  \sim
  \frac{\rho_\phi(1) \rho_\psi(1)\log 2}{2^3\zeta(2)}
  \Big|\Gamma\Big(\frac{1}{4}+\frac{it_\phi}{4}\Big)\Big|^2 \Big|\Gamma\Big(\frac{1}{4}+\frac{it_\psi}{4}\Big)\Big|^2
  \\
  \cdot L\Big(\frac{1}{2},\phi\Big)  L\Big(\frac{1}{2},\psi\Big)
  H_{\phi,\psi} L(1,\phi\times \psi) \frac{1}{T} .
\end{multline*}
This completes the proof of Theorem \ref{thm:QV}.

%
%
%

\section{The expected value}\label{sec:EV}

In this section, we will sketch the proof of Theorem \ref{thm:EV}.
Recall that $\phi$ is an even Hecke--Maass cusp form.
By Stirling's formula, for $t\sim T$ large and $t_\phi$ fixed, we have
\[
  \pi^{2it} \frac{\Gamma(\frac{1}{4}-\frac{it_\phi}{2}-it) \Gamma(\frac{1}{4}+\frac{it_\phi}{2}-it)} {\big|\Gamma(\frac{1}{2}+it)\big|^2}
   = e^{i\pi/4} t^{-1/2} e^{-2it\log \frac{t}{e\pi}} + O\Big(\frac{1}{T}\Big).
\]
By \eqref{eqn:mu_t=} and 
\eqref{eqn:SM-shortint}, we have
\begin{multline*}
  \mathbb{E}(\phi;T)
  = \frac{\rho_\phi(1)}{4}  \big|\Gamma(\frac{1}{4}+\frac{it_\phi}{2})\big|^2 L\Big(\frac{1}{2},\phi\Big) \\ \cdot
  \frac{1}{T}\int_{T}^{2T} \pi^{2it} \frac{\Gamma(\frac{1}{4}-\frac{it_\phi}{2}-it) \Gamma(\frac{1}{4}+\frac{it_\phi}{2}-it)} {\big|\Gamma(\frac{1}{2}+it)\big|^2}
  \frac{L\big(\frac{1}{2}-2it,\phi\big)}{|\zeta(1+2it)|^2} \dd t \\
  = \frac{e^{i\pi/4}\rho_\phi(1)}{4}  \big|\Gamma(\frac{1}{4}+\frac{it_\phi}{2})\big|^2 L\Big(\frac{1}{2},\phi\Big)
  \frac{1}{T}\int_{T}^{2T} t^{-1/2}
  e^{-2it\log \frac{t}{e\pi}} \frac{L\big(\frac{1}{2}-2it,\phi\big)}{|\zeta(1+2it)|^2}
  \dd t + O(T^{-1}).
\end{multline*}
By the same method as in Lemma \ref{lemma:AFE-zeta}, we have
\[
  \frac{1}{\zeta(1+2it)} = \sum_{k\leq x^{1+\varepsilon} } \frac{\mu(k)}{k^{1+2it}} e^{-k/x} + O(e^{-(\log T)^{1/5}}),
\]
for any $T^\varepsilon\ll x \ll T^B$ and  $t\asymp T$ large.
The contribution from the above error term is
\[
  \ll_\phi T^{-3/2} e^{-(\log T)^{1/5}}
  \int_{T}^{2T}  \Big|L\Big(\frac{1}{2}-2it,\phi\Big)\Big|^2 \dd t
  \ll_\phi T^{-1/2} e^{-(\log T)^{1/6}} .
\]
To prove Theorem \ref{thm:EV}, it suffices to consider
\begin{equation}\label{eqn:Laver}
  \frac{1}{T}\int_{T}^{2T} t^{-1/2}  e^{-2it\log \frac{t}{e\pi}}
  \Big| \sum_{k\leq x^{1+\varepsilon} } \frac{\mu(k)}{k^{1+2it}} e^{-k/x} \Big|^2 L\Big(\frac{1}{2}-2it,\phi\Big) \dd t.
\end{equation}
We may want to replace it by a smooth average. As in \S \ref{sec:QVsetup}, we consider
\begin{equation}\label{eqn:Laver-v}
  \frac{1}{T^{3/2}}\int_{0}^{\infty} v\Big(\frac{t}{T}\Big) e^{-2it\log \frac{t}{e\pi}}
  \Big| \sum_{k\leq x^{1+\varepsilon} } \frac{\mu(k)}{k^{1+2it}} e^{-k/x} \Big|^2 L\Big(\frac{1}{2}-2it,\phi\Big) \dd t,
\end{equation}
where $v(y)=v_U(y)$ is a smooth function with support in the interval $[1-1/U,2+1/U]$, $U\geq2$, such that $v_U^{(j)}(y) \ll U^j$ for $j\geq0$ and $v_U(y)=y^{-1/2}$ for $y\in[1+1/U,2-1/U]$.
Here we assume $T^\varepsilon\leq U \leq T^{1/3}$.
By \eqref{eqn:SM-shortint}, we know that the difference between \eqref{eqn:Laver} and \eqref{eqn:Laver-v} is $O_\phi(T^{-1/2+\varepsilon}U^{-1})$.
And as in Lemma \ref{lemma:AFE-Lfcn} (cf. \cite[\S5.2]{iwaniec2004analytic}), we have
\begin{equation*}
  L\Big(\frac{1}{2}-2it,\phi\Big) \\
    = \sum_{n\leq T^{1+\varepsilon}} \frac{\lambda_\phi(n)}{n^{1/2-2it}} W_{1}^-(n;t)
    + \frac{1}{i}
    \Big(\frac{t}{e\pi}\Big)^{4it}
    \sum_{n\leq T^{1+\varepsilon}} \frac{\lambda_\phi(n)}{n^{1/2+2it}} W_{1}^+(n;t)
    + O(T^{-1/2+\varepsilon}),
\end{equation*}
where
\[
  W_1^{\pm}(y;t)
  = \frac{1}{2\pi i} \int_{\varepsilon-iT^\varepsilon}^{\varepsilon+iT^\varepsilon} \Big( \frac{t}{\pi y} \Big)^s e^{\pm i \frac{\pi}{2} s} e^{s^2} \frac{\dd s}{s}
  .
\]
The above error term contributes
$O_\phi( T^{-1+\varepsilon} )$.
To prove Theorem \ref{thm:EV}, it suffices to consider
\begin{multline}\label{eqn:Lmain}
  \frac{1}{T^{3/2}}\int_{0}^{\infty} v\Big(\frac{t}{T}\Big)  e^{\pm 2it\log \frac{t}{e\pi}}
  \Big| \sum_{k\leq x^{1+\varepsilon} } \frac{\mu(k)}{k^{1+2it}} e^{-k/x} \Big|^2 \sum_{n\leq T^{1+\varepsilon}} \frac{\lambda_\phi(n)}{n^{1/2\pm2it}} W_{1}^\pm(n;t) \dd t \\
  = \frac{1}{2\pi i} \int_{\varepsilon-iT^\varepsilon}^{\varepsilon+iT^\varepsilon}
  \frac{1}{T^{3/2}}
  \sum_{k\leq x^{1+\varepsilon} } \frac{\mu(k)}{k} e^{-k/x}
  \sum_{\ell\leq x^{1+\varepsilon} } \frac{\mu(\ell)}{\ell} e^{-\ell/x}
  \sum_{n\leq x^2 T^{1+\varepsilon}} \frac{\lambda_\phi(n)}{n^{1/2+s}} \\
  \cdot \Big( \int_{0}^{\infty} v\Big(\frac{t}{T}\Big)  t^s e^{\pm 2it\log \frac{t}{e\pi}}
  \Big(\frac{\ell n}{k}\Big)^{\mp2it}  \dd t  \Big)
  \pi^{-s} e^{\pm i \frac{\pi}{2} s}  e^{s^2} \frac{\dd s}{s} + O(T^{-A}).
\end{multline}

We first deal with the $t$-integral
\[
  \int_{0}^{\infty} v\Big(\frac{t}{T}\Big)  t^s e^{\pm 2it\log \frac{t}{e\pi}}
  \Big(\frac{\ell n}{k}\Big)^{\mp2it}  \dd t
  = \int_{0}^{\infty} w_1(t)
  e^{i h_1(t) } \dd t,
\]
where
\[
  w_1(t)=v\Big(\frac{t}{T}\Big)  t^s
  \quad \textrm{and} \quad
  h_1(t)= \mp  2t \log(\frac{e\pi \ell n}{k t}).
\]
If $\frac{\pi \ell n}{k}<\frac{T}{2}$ or $\frac{\pi \ell n}{k}>3T$, then by repeated integration by parts, we have
(see e.g. \cite[Lemma 8.1]{blomer2013distribution})
\[
  \int_{0}^{\infty} v\Big(\frac{t}{T}\Big)  t^s e^{\pm 2it\log \frac{t}{e\pi}}
  \Big(\frac{\ell n}{k}\Big)^{\mp2it}  \dd t
  \ll T^{-A},
\]
for any $A>0$, since for $t\in \supp w_1 \subset [\frac{3}{4}T,\frac{5}{2}T]$ we have
\begin{equation}\label{eqn:w1j}
  w_1^{(j)}(t) \ll_j T^\varepsilon \Big(\frac{T}{U}\Big)^{-j}
   \quad \textrm{for $j\geq0$},
\end{equation}
\begin{equation}\label{eqn:h1>>}
  |h_1'(t)|=|2\log\frac{\pi \ell n}{kt}|\gg 1,
\end{equation}
\begin{equation}\label{eqn:h1j=}
  h_1^{(j)}(t) \asymp_j T^{1-j} \quad \textrm{for $j\geq2$}.
\end{equation}
This contributes a negligible error to \eqref{eqn:Lmain}.

If $\frac{T}{2} \leq \frac{\pi \ell n}{k} \leq 3T$, then
we want to apply the stationary phase method (see e.g. \cite[Proposition 8.2]{blomer2013distribution}).
Note that $\supp w_1 \subset [\frac{T}{4},4T]$.
Let $t_0=\frac{\pi \ell n}{k}$. Note that $h_1'(t_0)=0$.
Since for $t\in[\frac{T}{4},4T]$ we have \eqref{eqn:w1j}, \eqref{eqn:h1j=}, and
$
  h_1'(t)\ll 1,
$
we obtain
\[
  \int_{0}^{\infty} v\Big(\frac{t}{T}\Big)  t^s e^{\pm 2it\log \frac{t}{e\pi}}
  \Big(\frac{\ell n}{k}\Big)^{\mp2it}  \dd t
  = \sqrt{\frac{\pi \ell n}{k}} e^{\mp 2\pi i \frac{\ell n}{k}} w_{\pm}\Big(\frac{\pi \ell n}{k}\Big) + O(T^{-A}),
\]
where $w_{\pm}$ is a smooth function depending on $w_1$ and $h_1$, satisfying that
\begin{equation}\label{eqn:p_pm}
  \frac{\dd^j}{\dd x^j}w_{\pm}(x) \ll T^\varepsilon \Big(\frac{T}{U}\Big)^{-j}
  \quad \textrm{and} \quad
  \supp w_{\pm} \subseteq \supp w_1 .
\end{equation}
The contribution from the above error term to \eqref{eqn:Lmain} is negligible again.
Hence to bound \eqref{eqn:Lmain}, we need to consider
\[
  \sum_{n} \lambda_\phi(n)
  e^{\mp 2\pi i \frac{\ell n}{k}} n^{-s} w_{\pm}\Big(\frac{\pi \ell n}{k}\Big).
\]
By splitting the $n$-sum in to residue classes mod $k$, and detecting the summation congruence condition by primitive additive characters, we obtain
\begin{multline*}
  \sum_{n\geq1} \lambda_\phi(n)
  e^{\mp 2\pi i \frac{\ell n}{k}} n^{-s} w_{\pm}\Big(\frac{\pi \ell n}{k}\Big) \\
  =
  \sum_{a \; (\rmmod k)} e\Big(\mp\frac{\ell a}{k}\Big) \frac{1}{k} \sum_{q\mid k} \underset{b\; (\rmmod q)}{{\sum}^*} e\Big(\frac{-ab}{q}\Big)
  \sum_{n\geq1} \lambda_\phi(n) e\Big(\frac{bn}{q}\Big)
   n^{-s} w_{\pm}\Big(\frac{\pi \ell n}{k}\Big).
\end{multline*}
Hence it suffices to consider
\[
  \Sigma := \sum_{n\geq1} \lambda_\phi(n) e\Big(\frac{bn}{q}\Big)
   n^{-s} w\Big(\frac{\pi \ell n}{k}\Big),
\]
where $(b,q)=1$ and $w=w_+$ or $w_-$.
Now we are ready to apply the Voronoi summation formula \cite[Theorem 2]{meurman1988}, getting
\begin{multline}\label{eqn:Sigma=sums}
  \Sigma
   = \frac{-i}{q} \sum_{n\geq1} \lambda_\phi(n) e\Big(-\frac{\bar{b}n}{q}\Big) \int_{0}^{\infty} J_{2it_\phi}^+(\frac{4\pi \sqrt{ny}}{q}) \; y^{-s} w\Big(\frac{\pi \ell y}{k}\Big) \dd y \\
  + \frac{1}{q} \sum_{n\geq1} \lambda_\phi(n) e\Big(\frac{\bar{b}n}{q}\Big) \int_{0}^{\infty} K_{2it_\phi}^+\Big(\frac{4\pi \sqrt{ny}}{q}\Big) \; y^{-s} w\Big(\frac{\pi \ell y}{k}\Big) \dd y,
\end{multline}
where
\[
  J_{\nu}^+(z) := \frac{-\pi}{\sinh (\pi \nu/2)} (J_{\nu}(z) - J_{-\nu}(z)), \quad
  K_{\nu}^+(z) := 4 \cosh(\pi \nu/2) K_{\nu}(z).
\]
Note that $q\leq k\leq x^{1+\varepsilon}$, and by the properties of $w_{\pm}$ \eqref{eqn:p_pm} we know that $y\asymp \frac{k T}{\ell}$. By asymptotic evaluations
regarding the Bessel functions
(cf. Hough \cite[eq. (3.5) and (3.6)]{hough2012zero})
\begin{equation}\label{eqn:J&K}
  \begin{split}
     J_\nu^+(z) & = -\sqrt{\frac{2\pi}{z}}\sin(z-\pi/4) \left[1-\frac{16\nu^4-40\nu^2+9}{128z^2}\right] \\
                & \hskip 90pt - \pi \cos(z-\pi/4)\frac{\nu^2-1/4}{2z} \; + \; O\left(\frac{1+|\nu|^6}{z^3}\right), \\
     K_\nu(x)   & = \sqrt{\frac{\pi}{2z}} e^{-z} \left[1+ O\left(\frac{1+|\nu|^2}{z}\right)\right],
  \end{split}
\end{equation}
we know that the second sum in \eqref{eqn:Sigma=sums} is negligible provided $x\leq T^{1/2-\varepsilon}$.
Note that here we use the fact that by the Rankin--Selberg method we have
\[
  \sum_{n\leq N} \lambda_\phi(n)^2 \sim c_{\phi} N.
\]
The contribution from the error term of $J_{\nu}^+$ in \eqref{eqn:J&K} to \eqref{eqn:Lmain} 
is bounded by
\[
  O_\phi\Big(T^{-2+\varepsilon}x^{3+\varepsilon} \sum_n \frac{|\lambda_\phi(n)|}{n^{3/2}}\Big)
  = O_\phi\Big(T^{-2+\varepsilon}x^{3+\varepsilon} \Big)
  = O_\phi\Big(T^{-1+\varepsilon} U^{-2} \Big),
\]
if $x^{3/2}U\leq T^{1/2-\varepsilon}$.
Now we deal with the contribution from the main terms of $J_{\nu}^+$ in \eqref{eqn:J&K}.
Note that we can rearrange the main terms of $J_{\nu}^+(z)$ as $\sum_\pm P_\nu^\pm(z) e^{\pm iz}$ with
\[
  P_\nu^\pm(z)
   = \pm \sqrt{\frac{2\pi}{z}}
     \left(1-\frac{16\nu^4-40\nu^2+9}{128z^2}\right)
     \frac{e^{\mp i\pi/4}}{2i}
     - \pi \left(\frac{\nu^2-1/4}{2z}\right)
     \frac{e^{\mp i\pi/4}}{2}.
\]
It suffices to consider
\[
  \frac{1}{q} \sum_{n\geq1} \lambda_\phi(n) e\Big(-\frac{\bar{b}n}{q}\Big) \int_{0}^{\infty} P_{2it_\phi}^\pm\Big(\frac{4\pi \sqrt{ny}}{q}\Big) e^{\pm i \frac{4\pi \sqrt{ny}}{q}} \; y^{-s} w\Big(\frac{\pi \ell y}{k}\Big) \dd y.
\]
For the above $y$-integral, we define
\[
  w_2(y) = P_{2it_\phi}^\pm\Big(\frac{4\pi \sqrt{ny}}{q}\Big) \; y^{-s} w\Big(\frac{\pi \ell y}{k}\Big)
  \quad \textrm{and} \quad
  h_2(y) = \pm  \frac{4\pi \sqrt{ny}}{q}.
\]
Note that $\supp w_2 \subset [\frac{3}{4}\frac{kT}{\pi \ell},\frac{5}{2}\frac{kT}{\pi \ell}]$.
Assume $x\leq U^{1-\varepsilon}$.
For $y\in [\frac{3}{4}\frac{kT}{\pi \ell},\frac{5}{2}\frac{kT}{\pi \ell}]$ and $\ell \leq x^{1+\varepsilon}$, by \eqref{eqn:p_pm}, we have
\begin{equation}\label{eqn:w2j}
  w_2^{(j)}(y) \ll_j \Big(\frac{\sqrt{ny}}{q}\Big)^{-1/2} \Big( y^{-j} + T^\varepsilon \Big(\frac{T}{U}\Big)^{-j}\Big)
  \ll_j \Big(\frac{\sqrt{nkT}}{q\sqrt{\ell}}\Big)^{-1/2} T^\varepsilon \Big(\frac{T}{U}\Big)^{-j}
   \quad \textrm{for $j\geq0$},
\end{equation}
\begin{equation}\label{eqn:h2j=}
  h_2^{(j)}(y) \asymp_j \frac{\sqrt{n}}{q} \Big(\frac{kT}{\ell}\Big)^{1/2-j} \quad \textrm{for $j\geq1$}.
\end{equation}
Hence, by repeated integration by parts again (cf. \cite[Lemma 8.1]{blomer2013distribution}), we get
\[
  \int_{0}^{\infty} \Big(\frac{4\pi \sqrt{ny}}{q}\Big)^{-1/2} e^{\pm i \frac{4\pi \sqrt{ny}}{q}} \; y^{-s} w\Big(\frac{\pi \ell y}{k}\Big) \dd y
  \ll n^{-A} T^{-A},
\]
provided $x^{3/2}U\leq T^{1/2-\varepsilon}$.
Hence \eqref{eqn:Lmain} is bounded by
\[
  O_\phi(T^{-1+\varepsilon}U^{-2}),
\]
if $x^{3/2}U\leq T^{1/2-\varepsilon}$.
Now by taking $U=T^{1/3}$ and $x = T^{1/10}$ for example, we prove
$$
  \mathbb{E}(\phi;T) = O_\phi( T^{-1/2} e^{-(\log T)^{1/6}})=o(T^{-1/2}).
$$

\appendix

\section{Dirichlet series} \label{app:DS}

In this appendix, we prove \eqref{eqn:D=LH}.
As in \cite[\S3]{HughesYoung2010}, for $\Re(s)$ large enough, we have
\begin{multline*}
  \sum_{n} \frac{\lambda_\phi(\ell n)\lambda_\psi(k n)}{n^{1+2s}}
  =
  \Big(\prod_{(p,k\ell)=1} \sum_{j=0}^{\infty} \frac{\lambda_\phi(p^j)\lambda_\psi(p^j)}{p^{j(1+2s)}}\Big) \\
  \cdot \Big(\prod_{p|\ell} \sum_{j=0}^{\infty} \frac{\lambda_\phi(p^{j+\ell_p})\lambda_\psi(p^j)}{p^{j(1+2s)}}\Big)
  \Big(\prod_{p|k} \sum_{j=0}^{\infty} \frac{\lambda_\phi(p^j)\lambda_\psi(p^{j+k_p})}{p^{j(1+2s)}}\Big),
\end{multline*}
where
$p^{k_p}\| k$ and $p^{\ell_p}\| \ell$.
Note that
\[
  \prod_{p} \sum_{j=0}^{\infty} \frac{\lambda_\phi(p^j)\lambda_\psi(p^j)}{p^{j(1+2s)}}
  = \sum_{n} \frac{\lambda_\phi(n)\lambda_\psi(n)}{n^{1+2s}}
  = \frac{L(1+2s,\phi\times \psi)}{\zeta(2+4s)}.
\]
We have
\begin{multline*}
  \sum_{n} \frac{\lambda_\phi(k n)\lambda_\psi(\ell n)}{n^{1+2s}}
  = \frac{L(1+2s,\phi\times \psi)}{\zeta(2+4s)}
  \Big(\prod_{p|\ell} \sum_{j=0}^{\infty} \frac{\lambda_\phi(p^{j+\ell_p})\lambda_\psi(p^j)}{p^{j(1+2s)}}\Big)
    \\ \cdot
  \Big(\prod_{p|k} \sum_{j=0}^{\infty} \frac{\lambda_\phi(p^j)\lambda_\psi(p^{j+k_p})}{p^{j(1+2s)}}\Big) \\
  \cdot \Big(\prod_{p|k\ell} \Big(1-\frac{\lambda_\phi(p)\lambda_\psi(p)}{p^{1+2s}} +\frac{\lambda_\phi(p^2)+\lambda_\psi(p^2)}{p^{2(1+2s)}} -\frac{\lambda_\phi(p)\lambda_\psi(p)}{p^{3(1+2s)}} +\frac{1}{p^{4(1+2s)}}\Big) \Big(1-\frac{1}{p^{2(1+2s)}}\Big)^{-1}\Big).
\end{multline*}

Similarly, for $\Re(s_1+s_2)$ large enough, we have
\begin{multline*}
  \sum_{d} \frac{\alpha(k d)\alpha(\ell d)}{d^{2+s_1+s_2}}
  =
  \Big(\prod_{(p,k\ell)=1} \sum_{j=0}^{\infty} \frac{\alpha(p^j)\alpha(p^j)}{p^{j(2+s_1+s_2)}}\Big)
  \\ \cdot
  \Big(\prod_{p|k} \sum_{j=0}^{\infty} \frac{\alpha(p^{j+k_p})\alpha(p^j)}{p^{j(2+s_1+s_2)}}\Big)
  \Big(\prod_{p|\ell} \sum_{j=0}^{\infty} \frac{\alpha(p^j)\alpha(p^{j+\ell_p})}{p^{j(2+s_1+s_2)}}\Big).
\end{multline*}
Note that
\[
  \alpha(p^j) = \left\{ \begin{array}{ll}
                          1, & \textrm{if $j=0$ or $j=2$,} \\
                          -2, & \textrm{if $j=1$,} \\
                          0, & \textrm{if $j\geq3$,} \\
                        \end{array}  \right.
\]
and then
\[
  \prod_{p} \sum_{j=0}^{\infty} \frac{\alpha(p^j)\alpha(p^j)}{p^{j(2+s_1+s_2)}}
  = \prod_p  \Big( 1 + \frac{4}{p^{2+s_1+s_2}} + \frac{1}{p^{2(2+s_1+s_2)}} \Big).
\]
Hence
\begin{multline*}
  \sum_{d} \frac{\alpha(k d)\alpha(\ell d)}{d^{2+s_1+s_2}}
  =
  \Big(\prod_p  \Big( 1 + \frac{4}{p^{2+s_1+s_2}} + \frac{1}{p^{2(2+s_1+s_2)}} \Big)\Big) 
  \Big(\prod_{p|k\ell}  \Big( 1 + \frac{4}{p^{2+s_1+s_2}} + \frac{1}{p^{2(2+s_1+s_2)}} \Big)^{-1} \Big) \\
  \cdot \Big(\prod_{p|k} \sum_{j=0}^{\infty} \frac{\alpha(p^{j+k_p})\alpha(p^j)}{p^{j(2+s_1+s_2)}}\Big)
  \Big(\prod_{p|\ell} \sum_{j=0}^{\infty} \frac{\alpha(p^j)\alpha(p^{j+\ell_p})}{p^{j(2+s_1+s_2)}}\Big).
\end{multline*}
Note that we have $0\leq k_p,\ell_p \leq 2$ and $\min(k_p,\ell_p)=0$, we have
\begin{multline*}
  D_{\phi,\psi}(s,s_1,s_2) = \frac{L(1+2s,\phi\times \psi)}{\zeta(2+4s)} \Big(\prod_p  \Big( 1 + \frac{4}{p^{2+s_1+s_2}} + \frac{1}{p^{2(2+s_1+s_2)}} \Big)\Big) \\
  \cdot \Big( \prod_p \Big( 1 + \frac{A_{\phi,\psi,1}(p)}{p^{3/2+s+s_1}} + \frac{A_{\phi,\psi,2}(p)}{p^{2(3/2+s+s_1)}} + \frac{A_{\psi,\phi,1}(p)}{p^{3/2+s+s_2}} +
  \frac{A_{\psi,\phi,2}(p)}{p^{2(3/2+s+s_2)}} \Big) \Big),
\end{multline*}
where
\begin{multline*}
  A_{\phi,\psi,1}(p) = \Big(\sum_{j=0}^{\infty} \frac{\lambda_\phi(p^{j})\lambda_\psi(p^{j+1})}{p^{j(1+2s)}}\Big) \Big(-2-\frac{2}{p^{2+s_1+s_2}}\Big)   \Big( 1 + \frac{4}{p^{2+s_1+s_2}} + \frac{1}{p^{2(2+s_1+s_2)}} \Big)^{-1}
  \\ \cdot \Big(1-\frac{\lambda_\phi(p)\lambda_\psi(p)}{p^{1+2s}} +\frac{\lambda_\phi(p^2)+\lambda_\psi(p^2)}{p^{2(1+2s)}} -\frac{\lambda_\phi(p)\lambda_\psi(p)}{p^{3(1+2s)}} +\frac{1}{p^{4(1+2s)}}\Big) \Big(1-\frac{1}{p^{2(1+2s)}}\Big)^{-1},
\end{multline*}
\begin{multline*}
  A_{\phi,\psi,2}(p) = \Big(\sum_{j=0}^{\infty} \frac{\lambda_\phi(p^{j})\lambda_\psi(p^{j+2})}{p^{j(1+2s)}}\Big) \Big( 1 + \frac{4}{p^{2+s_1+s_2}} + \frac{1}{p^{2(2+s_1+s_2)}} \Big)^{-1}
  \\ \cdot \Big(1-\frac{\lambda_\phi(p)\lambda_\psi(p)}{p^{1+2s}} +\frac{\lambda_\phi(p^2)+\lambda_\psi(p^2)}{p^{2(1+2s)}} -\frac{\lambda_\phi(p)\lambda_\psi(p)}{p^{3(1+2s)}} +\frac{1}{p^{4(1+2s)}}\Big)  \Big(1-\frac{1}{p^{2(1+2s)}}\Big)^{-1}.
\end{multline*}
By rearranging the factors, we prove \eqref{eqn:D=LH}.

\section*{Acknowledgements}

The author is indebted to Prof. Ze\'{e}v Rudnick for teaching him the quantum variance and many useful discussions.
It is impossible to finish this work without his encouragement and help.
He wants to thank Prof. Paul Nelson for suggesting to think about Eisenstein series, and Peter Humphries for many helpful comments, especially for the local computations.


\end{document}